\documentclass[a4paper]{article}

\usepackage{amsmath}
\usepackage{amsfonts}
\usepackage[dvipdfmx]{graphicx}
\usepackage{float}
\usepackage{alltt}
\usepackage{color}
\usepackage[top=30truemm,bottom=30truemm,left=25truemm,right=25truemm]{geometry}
\usepackage{url}

\newtheorem{thm}{Theorem}
\newtheorem{prop}{Proposition}
\newtheorem{cor}{Corollary}
\newtheorem{lemma}{Lemma}
\newtheorem{proof}{Proof}
\renewcommand{\theproof}{}
\newtheorem{definition}{Definition}
\newtheorem{algo}{Algorithm}
\newtheorem{example}{Example}
\newtheorem{remark}{Remark}

\newtheorem{benchmark}{Benchmark Problem}

\def\qed{\hfill $\Box$}
\def\pd#1{\partial_{#1}}

\def\almostsurely{\mathrm{a.s.}} 
\def\Prob{\mathbf{P}} 
\def\E#1{\mathbf E\left(#1\right)}

\def\({\left(}
\def\){\right)}
\def\ObservedData{\mathcal D}
\def\because{}
\def\im{\mathrm{Im}}
\def\mytheta{c}
\def\gi{g}
\def\gj{h}

\title{Holonomic Gradient Method for Two Way Contingency Tables}
\author{Yoshihito Tachibana, 
Yoshiaki Goto,
Tamio Koyama,
Nobuki Takayama}
\if0
\address{
  \affilnum{1}\ Department of Mathematics, Graduate School of Science, %
  Kobe University, Kobe, Japan\\
  \affilnum{2}\ General Education, Faculty of Commerce, %
  Otaru University of Commerce, Otaru, Japan\\
  \affilnum{3}\ Department of Mathematics, College of Science, %
  Rikkyo University, Tokyo , Japan\\
  \affilnum{4}\ Department of Mathematics, Graduate School of Science, %
  Kobe University, Kobe, Japan\\
}
\emails{
  {\tt tatibana@math.kobe-u.ac.jp}\ (Y. Tachibana), %
  {\tt goto@res.otaru-uc.ac.jp }\ (Y. Goto), %
  {\tt koyamatamio@rikkyo.ac.jp}\ (T. Koyama), %
  {\tt takayama@math.kobe-u.ac.jp}\ (N. Takayama)
}
\fi
\date{July 5, 2019 }

\begin{document}
\maketitle

\begin{abstract}
  The holonomic gradient method gives an algorithm
  to efficiently and accurately evaluate normalizing constants
  and their derivatives.
  We apply the holonomic gradient method
  in the case of the conditional Poisson or multinomial distribution on
  two way contingency tables.
  We utilize the modular method in computer algebra or some other tricks for
  an efficient and exact evaluation,
  and we compare them and discuss on their implementation.
  We also discuss on a theoretical aspect of 
  the distribution
  from the viewpoint of the conditional maximum likelihood estimation.
  We decompose parameters of interest and nuisance parameters in terms of
  sigma algebras for general two way contingency tables with arbitrary
  zero cell patterns.
\end{abstract}

\if0
\keywords{
  Holonomic Gradient Method, %
  Two Way Contingency Tables, %
  Modular Method, %
  Conditional Maximum Likelihood Estimation}
\ams{
  33C90, 
  65Q10, 
  62B05, 
  62H17  
}
\fi

\section{Introduction}

The holonomic gradient method (HGM) proposed in \cite{HGM}
provides an algorithm to efficiently and accurately 
evaluate normalizing constants and their derivatives.
This algorithm utilizes holonomic differential equations or
holonomic difference equations.
Y. Goto and K. Matsumoto \cite{MG} determined a system of difference equations
for the hypergeometric system of type $(k,n)$.
The normalizing constant
of the conditional Poisson or multinomial distribution on
two-way contingency tables is a polynomial solution
of this hypergeometric system.
Thus, we can apply these difference equations to exactly evaluate the normalizing constant
and its derivatives by HGM.
However, there is a difficulty:
numerical evaluation errors, incurred by repeatedly applying these difference equations
or recurrence relations, increase rapidly 
if we use floating point number arithmetic.
Accordingly, we evaluate the normalizing constant by exact rational arithmetic.
However, in general, exact evaluation is slow.
The modular method in computer algebra 
(see, e.g., \cite{MM1}, \cite{MM2})
has been used for efficient 
and exact evaluation over the field of rational numbers. 
We apply the modular method or some other tricks to our evaluation procedure.
We compare these methods and explore implementation of these algorithms 
in Sections \ref{sec:contiguity} and \ref{sec:modular_and_tricks}.

We then turn from computation to a theoretical question before presenting
statistical applications.  
An interesting application of the evaluation of the normalizing
constant is the conditional maximum likelihood estimation (CMLE)
of parameters of interest with fixed marginal sums. 
Broadly speaking, the parameters of interest in this case are
(generalized) odds ratios.
However, we could not identify a rigorous formulation on parameters of interest 
for contingency tables with zero cells in the literature.
In Sections \ref{sec:sufficient} and \ref{sec:application},
we introduce ${\cal A}$-distributions as a conditional distribution.
The conditional Poisson or multinomial distribution on contingency tables
with fixed marginal sums
is a special and important case of ${\cal A}$-distributions.
We will decompose parameters of interest
and nuisance parameters in terms of $\sigma$-algebras.
We note that 
the conditional distribution of a statistic
given the occurrence of a sufficient statistic of a nuisance parameter
does not depend on the value of the nuisance parameter.
Hence, by the conditional distribution,
we can estimate the parameter of interest
without being affected by the nuisance parameter.

Finally, we apply our method to a CMLE problem for contingency tables.
This problem is discussed in \cite{ogawa} for the case of
$2 \times n$ contingency tables and the work presented here 
generalizes this to two-way contingency tables 
of any size and with any pattern of zero cells.

\section{Two Way Contingency Tables} \label{sec:two}
 We introduce our notation for contingency tables
and review how the normalizing constant for a conditional distribution
is expressed by a hypergeometric polynomial of type $(k,n)$.
There are several salient references on contingency tables. 
Among them, we will refer to \cite{agresti} and \cite[Chap 4]{dojo} herein.
\subsection{$r_1\times r_2$ Contingency Table}
\begin{definition}[$r_1 \times r_2$ (2 way) contingency table]
An $r_1 \times r_2$ matrix with non-negative integer entries
is called an
$r_1 \times r_2$ {\it contingency table}\/.  
For a contingency table $u=(u_{ij})$, we define the {\it row sum vector} by 
$\beta^{r}=\left(\sum_{j}u_{1j},\cdots ,\sum_{j}u_{r_{1}j}\right)^{T}$, 
and the {\it column sum vector} by
$\beta ^{c}=\left(\sum_{i}u_{i1},\cdots ,\sum_{i}u_{ir_{2}}\right)^{T}$.
A contingency table $u$ is also written as a column vector of length 
$r_1 \times r_2$, denoted by $u^{f}$. 
The column vector obtained by joining 
$\beta ^{r}$ and $\beta ^{c}$
is denoted by $\beta$,
which is called the {\it row column sum vector} 
or the {\it marginal sum vector}\/.
\end{definition}

\begin{example}[$2 \times 3$ contingency table and the row sum and the column sum] \rm

For the $2 \times 3$ contingency table
       $u=\left(\begin{array}{ccc}5&3&6\\7&2&4\end{array}\right)$
the row sum vector and the column sum vector are 
$$\beta^{r}=\left(\begin{array}{c}5+3+6=14\\7+2+4=13\end{array}\right),\
  \beta^{c}=\left(\begin{array}{c}5+7=12\\3+2=5\\6+4=10\end{array}\right).
$$
The corresponding vector expressions of
$u^{f}$ and $\beta$
are 
$$u^{f}=\left(\begin{array}{cccccc}5&3&6&7&2&4\end{array}\right)^{T},\
	       \beta=\left(\begin{array}{ccccc}14&13&12&5&10\end{array}\right)^{T}.$$
\end{example}

We fix $p=(p_{ij})\in \mathbb{R}_{>0}^{r_1\times r_2},\ N\in \mathbb{N}_{0}$
and consider the multinomial distribution 
$$\frac{N!p^{u}}{u!|p|^{N}},\ p^{u}=\prod _{i,j}p_{ij}^{u_{ij}},\
u!=\prod_{i,j}u_{ij}!$$
on contingency tables satisfying $|u|=\sum_{i,j}u_{ij}=N$.
The conditional distribution obtained by fixing
the row sum vector $\beta ^{r}$ and the column sum vector 
$\beta^{c}$ is
\begin{equation}
\frac{p^{u}}{u!Z(\beta ; p)},\quad\quad
Z(\beta ;p)=\sum_{Au^{f}=\beta,\
u\in \mathbb{N}_{0}^{r_1 \times r_2}}\frac{p^{u}}{u!}.
\end{equation}
Here, the polynomial
$Z(\beta ;p)$
is the normalizing constant of this conditional distribution.
The matrix $A$ satisfies the following conditions:
(1) entries are $0$ or $1$; 
(2) $Au^f$ is the marginal sum vector
(see Example \ref{example:2}).
The expectation of the $u$-value at $(i,j)$ 
of this conditional distribution
is equal to
\begin{equation}
   E[U_{ij}] = p_{ij}\frac{\partial \log Z}{\partial p_{ij}} .
\end{equation}
Exact evaluation of the conditional probability of getting 
a contingency table $u$ and evaluation of the expectation is reduced to
the evaluation of the normalizing constant $Z$ and its derivatives.
For given rational numbers $p_{ij}$, we provide an efficient and exact method to evaluate $Z$ and its derivatives.

\begin{example}[example of $A$]\label{example:2} \rm
When
$u^f = \left(\begin{array}{cccccc}5&3&6&7&2&4\end{array}\right)^{T}$,
the matrix $A$ is 
$$A=\left(\begin{array}{cccccc}
1&1&1&0&0&0\\
0&0&0&1&1&1\\
1&0&0&1&0&0\\
0&1&0&0&1&0\\
0&0&1&0&0&1\\
\end{array}\right)$$
and we have
$$Au^{f}=\left(\begin{array}{cccccc}
1&1&1&0&0&0\\
0&0&0&1&1&1\\
1&0&0&1&0&0\\
0&1&0&0&1&0\\
0&0&1&0&0&1\\
\end{array}\right)\left(\begin{array}{c}
5\\
3\\
6\\
7\\
2\\
4\end{array}\right)=\left(\begin{array}{c}
14\\
13\\
12\\
5\\
10\end{array}
\right)=\beta.$$
\end{example}
\begin{example} \rm
We consider $2\times 2$ contingency tables with
the marginal sum vector
$\beta = \left(\begin{array}{cccc}5&7&8&4\end{array}\right)^{T}$.
All contingency tables $u$ satisfying
$Au^{f}=\beta$
are
$$\left(\begin{array}{cc}5&0\\3&4\end{array}\right),\ 
\left(\begin{array}{cc}4&1\\4&3\end{array}\right),\ 
\left(\begin{array}{cc}3&2\\5&2\end{array}\right),\ 
\left(\begin{array}{cc}2&3\\6&1\end{array}\right),\ 
\left(\begin{array}{cc}1&4\\7&0\end{array}\right).
$$
These $u$ are written as
$$\left(\begin{array}{cc}5&0\\3&4\end{array}\right)+i\left(\begin{array}{cc}-1&1\\1&-1\end{array}\right),\
	    (i=0,1,2,3,4).$$
\end{example}

\section{The Normalizing Constant of $2 \times 2$ Tables} \label{sec:normalizing}
It is known that the normalizing constant 
for the conditional distribution for 
$r_1 \times r_2$
tables is $A$-hypergeometric polynomial (see, e.g.,  \cite[Section 6.13]{dojo}).
We will illustrate this correspondence for $2 \times 2$ 
contingency tables.

Consider the marginal sum vector
$\beta
=\left(u_{11},u_{21}+u_{22},u_{11}+u_{21},u_{22}\right)$
with $u_{ij} \geq 0$.
The $2 \times 2$ contingency tables with the marginal sum vector
$\beta$ are
$$u=\left(\begin{array}{cc}u_{11}&0\\u_{21}&u_{22}\end{array}\right)+i\left(\begin{array}{cc}-1&1\\1&-1\end{array}\right),\
(i=0,1,2,\cdots ,n).$$
Here, we have $n=\min\{u_{11},u_{22}\}$. 
The normalizing constant is
\begin{eqnarray*}
Z(\beta ;p)&=&\sum
 _{i=0}^{n}\frac{p_{11}^{u_{11}-i}p_{12}^{i}p_{21}^{u_{21}+i}p_{22}^{u_{22}-i}}{(u_{11}-i)!(i)!(u_{21}+i)!(u_{22}-i)!}\\
&=&\frac{p_{11}^{u_{11}}p_{21}^{u_{21}}p_{22}^{u_{22}}}{u_{11}!u_{21}!u_{22}!}\sum _{i=0}^{n}\frac{(-u_{11})_{i}(-u_{22})_{i}}{(u_{21}+1)_{i}(1)_{i}}\left(\frac{p_{12}p_{21}}{p_{11}p_{22}}\right)^{i},
\end{eqnarray*}
where $(a)_i = a(a+1) \cdots (a+i-1)$.
Then, it can be expressed in terms of the Gauss hypergeometric function
$$
{}_2
F_{1}(a,b,c;x)=\sum_{i=0}^{\infty}\frac{(a)_{i}(b)_{i}}{(c)_{i}(1)_{i}}x^{i}.$$
Note that when $a,b\in \mathbb{Z}_{\leq 0}$, it is a polynomial.
The normalizing constant can also be expressed in terms of ${}_2F_1$ for other types
of marginal sum vectors.
A consequence of this observation is that we can utilize
several formulae of the hypergeometric function to evaluate the normalizing
constant.

\section{Contiguity relation} \label{sec:contiguity}
In the previous section, we expressed the normalizing constant
for $2 \times 2$ contingency tables with a fixed marginal sum vector
in terms of the Gauss hypergeometric function.
For $r_1 \times r_2$ contingency tables,
the normalizing constant with a fixed marginal sum vector
can be expressed in terms of the Aomoto-Gel'fand 
hypergeometric function of type $(r_1, r_1+r_2)$
\cite{TKT}
(the function ${}_2F_{1}$ is of type $(2,4)$). 
This hypergeometric function is also called the
$A$-hypergeometric function for the product of the $(r_1-1)$-simplex
and $(r_2-1)$-simplex.
The difference holonomic gradient method for these hypergeometric functions
utilizes contiguity relations.
We illustrate this for the case of the Gauss hypergeometric function; for the general case, see \cite{MG}.
\begin{example}[the case of ${}_2F_1$] \label{example:4}  \rm
Put 
$f(a)={}_2F_1(a,b,c;x)$
and
$$F(a)=\left(\begin{array}{c}f(a)\\ \theta_xf(a)\end{array}\right),\ M(a)=\frac{1}{a-c+1}\left(\begin{array}{cc}bx+a-c+1&x-1\\
			   -abx&a(1-x)\end{array}\right),$$
where $\theta_x $ is the Euler operator $ x\partial _x$.
Then, we have 
\begin{equation} \label{eq:2f1downstep}
F(a)=M(a)F(a+1).
\end{equation}
Now, note the following relations:
\begin{eqnarray}
\frac{1}{a}(a+\theta _{x})\bullet f(a)&=&f(a+1)\label{2f1_1},\\
\big( \theta _{x}(c-1+\theta _{x})-x(a+\theta_{x})(b+\theta_{x}) \big)\bullet f(a)&=&0.\label{2f1_2}
\end{eqnarray}
The first relation can be shown from the series expansion and the second relation
is the Gauss hypergeometric differential equation.
It follows from (\ref{2f1_1}), (\ref{2f1_2}) that we have
\begin{eqnarray*}
\frac{1}{a}(a+\theta _{x})\bullet F(a)&=&F(a+1),\\
\theta_x F(a) &=&\left(\begin{array}{cc}
0&1\\
\frac{abx}{1-x}&\frac{ax+bx-c+1}{1-x}\end{array}\right)F(a)\\
&=&A(a)F(a).
\end{eqnarray*}
Next, we have (\ref{eq:2f1downstep}) as
\begin{eqnarray*}
\frac{1}{a}(a+\theta _x)\bullet F(a)&=&\frac{1}{a}(aE+A(a))F(a),\\
F(a)&=&\left(\frac{1}{a}(aE+A(a))\right)^{-1}F(a+1)\\
&=&M(a)F(a+1),
\end{eqnarray*}
where $E$ is the identity matrix.
\end{example}

A relation like $F(a) = M(a)F(a+1) $
is called a {\it contiguity relation}\/.
In \cite{MG}, the vector valued function $F(a)$ 
is called the {\it Gauss-Manin vector}\/.

There are several algorithms to obtain contiguity relations
\cite{T1989}, \cite{Oshima}, \cite{OT}, \cite{MG}.
Among them, we choose to use the method of twisted cohomology groups
given in \cite{MG},
because it is the most efficient method for the case of two-way
contingency tables.

We briefly summarize the method given in \cite{MG}.
Consider the hypergeometric series 
$f(\alpha ;x)$ of type $(r_1,r_1+r_2)$.
Here, the parameter $\alpha =(\alpha_1,\ldots ,\alpha_{r_1+r_2-1})$ stands for
the marginal sum vector $\beta$
and  
the variable $x=(x_{ij})_{1\leq i\leq r_1-1, 1\leq j\leq r_2-1}$ stands for $p$.
It follows from the twisted cohomology group (a vector space spanned by 
equivalence classes of differential forms)
associated to the integral representation of $f$
that the contiguity relation 
for 
  $\alpha_i \to \alpha_i +1$ 
can be obtained as follows.

We consider the twisted cohomology group $H$ (resp. $H'$) standing for
the function $f(\alpha;x)$ (resp. $f(\alpha;x)|_{\alpha_i \to \alpha_i +1}$). 
Both twisted cohomology groups are of dimension  
$\displaystyle r=\binom{r_1+r_2-2}{r_1-1}$.
We take a basis $\varphi_1, \ldots, \varphi_r$ of $H$ such that 
the ``integral'' of $(\varphi_1 ,\ldots ,\varphi_r )^T$ gives 
a constant multiple of the Gauss-Manin vector
$$
F(\alpha ;x)=(f(\alpha ;x), \delta^{(2)} \bullet f(\alpha ;x) ,\ldots ,
\delta^{(r)}\bullet f(\alpha ;x) )^T ,
$$
where $\delta^{(i)}$ is some differential operator with respect to $x=(x_{ij})$. 
There exist a basis $\varphi'_1, \ldots, \varphi'_r$ of $H'$ and 
a linear map $\mathcal{U}_i :H' \to H$ such that 
the integral of 
$\bigl( \mathcal{U}_i(\varphi'_1 ),\ldots ,\mathcal{U}_i(\varphi'_r) \bigr)^T$
gives a constant multiple of the shifted Gauss-Manin vector
$F(\alpha ;x)|_{\alpha_i \to \alpha_i +1}$. 
Let $U_i (\alpha ;x)$ be a representation matrix of $\mathcal{U}_i$ with respect to the bases 
$\{ \varphi'_i \}$ and $\{ \varphi_j \}$:
$$
\bigl( \mathcal{U}_i(\varphi'_1 ),\ldots ,\mathcal{U}_i(\varphi'_r) \bigr)^T
=U_i(\alpha ;x) \cdot (\varphi_1 ,\ldots ,\varphi_r )^T .
$$
Integrating both sides, 
we thus obtain the
contiguity relation
$$
F(\alpha ;x)|_{\alpha_i \to \alpha_i +1}=\tilde{U}_i (\alpha ;x)F(\alpha ;x),
$$
where $\tilde{U}_i$ is a constant multiple of $U_i$.
It turns out that the representation matrix $U_i$ can be expressed in terms of
a simple diagonal matrix and base transformation matrices
which can be obtained by evaluating intersection numbers among
differential forms.
The contiguity relation for $\alpha_i \to \alpha_i -1$ can be derived
analogously.
For more details, see \cite{MG}. 
Here, we illustrate this method in the case of ${}_2F_1$.

\begin{example}[the case of $_2 F_1$ ($r_1=r_2=2$, $r=2$)] \label{example:5} \rm
 For the parameter  $(a,b,c)$ of $_2 F_1$,
we put
    $$
    (\alpha_1 ,\alpha_2 ,\alpha_3)=(b,-a,c-b-1).
    $$
Here, we set $\alpha_0 =-\alpha_1 -\alpha_2 -\alpha_3=a-c+1$ for convenience.
Since the move $a+1 \to a$ corresponds to
    $\alpha_2-1 \to \alpha_2$ (and $\alpha_0+1 \to \alpha_0$)
in the new parametrization,
the matrix $M(a)$ in Example \ref{example:4} 
stands for $U_2 (\alpha ;x)$.
The representation matrix 
$U_2$ has the following decomposition.\footnote{see the appendix (Section \ref{sec:appendix}) for more details.}
    \begin{align*}
      U_2 =& \frac{\alpha_1 (\alpha_2 -1)}{\alpha_3} 
      \begin{pmatrix}
        \frac{1}{\alpha_0}+\frac{1}{\alpha_1} & \frac{1}{\alpha_0} \\
        \frac{1}{\alpha_0} & \frac{1}{\alpha_0}+\frac{1}{\alpha_2}
      \end{pmatrix}
      \begin{pmatrix}
        \alpha_1 & -\alpha_1 \\ 0 & -\alpha_2
      \end{pmatrix} \\
      & \qquad \cdot 
      \begin{pmatrix}
        1 & 0 \\ 0 &1-x
      \end{pmatrix}
      \begin{pmatrix}
        \frac{1}{\alpha_0+1}+\frac{1}{\alpha_1} & \frac{1}{\alpha_0+1} \\
        \frac{1}{\alpha_0+1} & \frac{1}{\alpha_0+1}
      \end{pmatrix}
      \begin{pmatrix}
        \frac{\alpha_1 +\alpha_3}{\alpha_2 -1} & 1 \\ 
        1 & \frac{\alpha_2 -1+\alpha_3}{\alpha_1}
      \end{pmatrix}.
    \end{align*}
The matrices except the diagonal matrix ${\rm diag}(1,1-x)$ 
are expressed by intersection numbers.
Since we have $\delta^{(2)}=\frac{1}{\alpha_2}\theta_x$,
the matrix $U_2$ has a small difference with $M(a)$ in Example
\ref{example:4} and we obtain $M(a)$ by adjusting the scale factor $1/\alpha_2$ of $\theta_x$.
\end{example}

By the contiguity relation, we can evaluate the normalizing constant $Z$
and its derivatives.
Let us explain the procedure for the case of ${}_2 F_{1}$.

Suppose $a\in \mathbb{Z}_{<-1}$.
By the contiguity relation (\ref{eq:2f1downstep}), we have
\begin{eqnarray}
F(a)&=&M(a)F(a+1) \nonumber \\
&=&M(a)M(a+1)F(a+2) \nonumber \\
&\vdots& \nonumber \\
&=&M(a)M(a+1)\cdots M(-2)F(-1). \label{eq:composite_of_linear_transformation}
\end{eqnarray}
Then, we can obtain the value of $F(a)$ from the initial value 
$F(-1)=(1-\frac{b}{c}x,-\frac{b}{c}x)^T$ 
by applying linear transformations.
Values of the normalizing constant and its derivatives can be obtained
from $F(a)$ with the differential equation for the Gauss hypergeometric 
function.
This method is called the {\it difference holonomic gradient method}
(difference HGM) and can be generalized to the case of $r_1 \times r_2$
contingency tables 
with the Gauss-Manin vector and contiguity relations 
given in \cite{MG}.

We note that a naive evaluation of the polynomial $Z$ is very slow.
For example, the polynomial $Z$ of the $2 \times 5$ contingency table 
with the row sum $(4n,5n)$, the column sum $(5n,n,n,n,n)$
and 
$p = \left(\begin{array}{ccccc}
1 & 1/2 & 1/3 & 1/5 & 1/7 \\
1 & 1 & 1& 1& 1 \\
\end{array}\right)$
can be
expressed in terms of the Lauricella function 
$F_D(-4n;-n,-n,-n,-n; n+1; 1/2,1/3,1/5,1/7)$ of $4$ variables (see, e.g.,  \cite{goto-2017}).
The number of terms is $O(n^4)$.
Here is a comparison of the naive summation of $F_D$ and our HGM implementation discussed
in the next section. \\[0.5ex]
\begin{tabular}{l|rrr}
$n$             & 20   & 30    & 40 \\ \hline
Naive summation (in seconds) & 16.0 & 111.7 & 456.6 \\
HGM             (in seconds) & 0.28 & 0.276 & 0.284 \\
\end{tabular} \\[0.5ex]
Thus, the HGM is worth researching.

We briefly introduce an algorithm of difference HGM for $r_1 \times r_2$ contingency tables. 
The following algorithm computes the Gauss-Manin vector $F(\beta;p)$ which is 
essentially same as $F(\alpha ;x)$ in the above 
(for the correspondence between $(\beta ;p)$ and $(\alpha ;x)$, see \cite[Proposition 7.1]{MG}).  
In fact, we give improvement of Step 2--4 of \cite[Algorithm 7.8]{MG}. 

\begin{algo}[A modified version of {\cite[Step 1--4 of Algorithm 7.8]{MG}}] \label{algorithm:contiguity}~

Input: $\beta =(\beta^{(1)}_1,\dots ,\beta^{(1)}_{r_1} ;\beta^{(2)}_1 ,\dots ,\beta^{(2)}_{r_2})$: 
a marginal sum vector, 
$p=(p_{ij}) \in \mathbb{Q}_{>0}^{r_1\times r_2}$: probabilities of the cells.

Output: the Gauss-Manin vector $F(\beta ;p)$ (which is a vector of size $r={r_1+r_2-2 \choose r_1-1}$).

\begin{enumerate}
\item Set $B_0 =(1,\dots ,1,\beta^{(1)}_1+\cdots +\beta^{(1)}_{r_1}-r_1+1;
  \beta^{(2)}_1 ,\dots ,\beta^{(2)}_{r_2})$. 
  Compute $F(B_0 ;p)$ by the definition. 
  (In this case, the normalizing constant $Z(B_0 ;p)$ is a polynomial of small degree, and hence
  the Gauss-Manin vector $F(B_0 ;p)$ is easily computed.) 
\item For $k=1,\dots ,r_1-1$, define $B_k$ inductively as 
  $B_k =B_{k-1} +(\beta^{(1)}_k -1)\cdot \delta_k$, 
  where 
  $$
  \delta_k= (0 ,\dots ,0,\underset{k\text{-}{\rm th}}{1},0,\dots ,0,-1;0,\dots ,0) 
  $$
  (note that $B_{r_1-1}$ is $\beta$).  
  Evaluate the contiguity matrices $C_k (t)$ that satisfy 
  \begin{align*}
    F(B_{k-1}+(T+1)\delta_k ;p)=C_k(T)\cdot F(B_{k-1}+T\delta_k ;p),\quad 
    T=0,1,\dots ,\beta^{(1)}_k -2. 
  \end{align*}
  Here, $t$ is an indeterminate and each entry of $C_k (t)$ is 
  an element of $\mathbb{Q}(t)$. 
\item For $k=1,\dots ,r_1-1$, compute $F(B_k ;p)$ inductively as 
  \begin{align}  \label{GLT}
    F(B_k;p) =C_k(\beta^{(1)}_k -2) \cdots C_k(1) C_k(0)F(B_{k-1};p). 
  \end{align}
\item Return $F(B_{r_1-1};p)$.
\end{enumerate}
\end{algo}
By using $F(\beta ;p)$, we can compute the normalizing constant $Z(\beta ;p)$ and 
the expectations $E[U_{ij}]$ (see \cite[Step 5--7 of Algorithm 7.8]{MG}). 

\begin{example}[cf. {\cite[Example 7.10]{MG}}]
  We consider $3\times 3$ contingency tables whose marginal sum vector is 
  $\beta =(2,3,3;1,3,4)$. 
  In this case, the Gauss-Manin vector is of size ${3+3-2 \choose 3-1}=6$. 
  \begin{enumerate}
  \item We set $B_0 =(1,1,6;1,3,4)$, and compute $F(B_0;p)$ by the definition. 
    In this case, the normalizing constant $Z(B_0;p)$ has only eight terms. 
  \item We set 
    $B_1 =(2,1,5;1,3,4)$, 
    $B_2 =(2,3,3;1,3,4) (=\beta)$. 
    By using notations in \cite{MG}, we put 
    \begin{align*}
      C_1 (t)=U_1^{-1}(-5+t,-2-t,-1,3,4,1;x),\quad 
      C_2 (t)=U_2^{-1}(-4+t,-2,-2-t,3,4,1;x) .
    \end{align*}
    Here, $x\in \mathbb{Q}^{(r_1-1)\times (r_2-1)}$ is defined from $p$. 
    We have
    \begin{align*}
      &C_1(0) F(1,1,6;1,3,4;p)= F(2,1,5;1,3,4;p) ,\\
      &C_2(0) F(2,1,5;1,3,4;p)= F(2,2,4;1,3,4;p) ,\quad 
       C_2(1) F(2,2,4;1,3,4;p)= F(2,3,3;1,3,4;p) .
    \end{align*}
  \item We compute the product
    \begin{align*}
      C_2(1)C_2(0)C_1(0)F(B_0;p)
      &=C_2(1)C_2(0)C_1(0)F(1,1,6;1,3,4;p) \\
      &=C_2(1)C_2(0)F(2,1,5;1,3,4;p) \ (=C_2(1)C_2(0)F(B_1;p)) \\
      &=C_2(1)F(2,2,4;1,3,4;p)  \\
      &=C_2(1)F(2,3,3;1,3,4;p) \ (=F(B_2;p)) .
    \end{align*}
  \item We obtain the Gauss-Manin vector $F(B_2;p)=F(\beta ;p)$. 
  \end{enumerate}
  For example, when 
  $p=\left(
    \begin{array}{ccc}
      1 & 1/2 & 1/3 \\
      1 & 1/5 & 1/7 \\
      1 &1 &1 \\
    \end{array}
  \right)$, 
  the $6\times 6$ matrix 
  $C_2 (t)$ is given as follows\footnote{
    It is obtained by our program \texttt{gtt\_ekn3} as \\
    \texttt{gtt\_ekn3.downAlpha3(2,2,2 | arule=gtt\_ekn3.alphaRule\_num([-5+t,-2,-1-t,3,4,1],2,2),} \\
    \phantom{\texttt{gtt\_ekn3.downAlpha3(2,2,2 |}}
    \texttt{xrule=gtt\_ekn3.xRule\_num([[1,1/2,1/3],[1,1/5,1/7],[1,1,1]],2,2))}. 
  }. 
\begin{align*}
  C_2 (t)=
  &\left(
  \begin{array}{cccccc}
    \frac{-( 35  {t}+ 29)} {35( {t}+ 2)}
    &  \frac{ 12} {5 (  {t}+ 2)}
    &  \frac{ 24} {7 (  {t}+ 2)}
    &  \frac{ - 12} {5 (  {t}+ 2)}
    &  \frac{ - 24} {7 (  {t}+ 2)}
    & 0 \cr
    \frac{ 1}{ 5}
    & \frac{-1}{ 5}
    & 0
    & \frac{ 1}{ 5}
    & 0
    & 0 \cr
    \frac{ 1}{ 7}
    & 0
    & \frac{ -1}{ 7}
    & 0
    & \frac{ 1}{ 7}
    & 0 \cr
    \frac{ - 8} { 5(  {t}+ 2)}
    &  \frac{ 8} { 5(  {t}+ 2)}
    & 0
    &  \frac{   21  {t}- 73} {35 (  {t}+ 2)}
    &  \frac{ - 88} { 35(  {t}+ 2)}
    &  \frac{ 88} {35 (  {t}+ 2)} \cr
    \frac{ - 6} {7 (  {t}+ 2)}
    & 0
    &  \frac{ 6} {7 (  {t}+ 2)} 
    &  \frac{ - 33} {35 (  {t}+ 2)}
    &  \frac{ 10  {t}- 47} {35 (  {t}+ 2)}
    &  \frac{ - 33} { 35(  {t}+ 2)} \cr
    0& 0& 0& \frac{ - 1}{ 35}& \frac{ 1}{ 35}& \frac{ - 1}{ 35}
  \end{array}
  \right) .
\end{align*}
\end{example}

\begin{remark}
  The algorithm given in \cite{MG} requires more matrix multiplications 
  than Algorithm \ref{algorithm:contiguity}. 
  As \cite[Example 7.10]{MG}, the former algorithm computes the above $F(2,3,3;1,3,4;p)$ 
  by nine matrix multiplications (each `` $\mapsto$'' means one multiplication): 
  \begin{align*}
    &F(1,1,2;2,1,1;p) 
      \mapsto F(1,1,3;2,2,1;p) 
      \mapsto F(1,1,4;2,3,1;p) \\
    &\mapsto F(1,1,5;2,3,2;p) 
      \mapsto F(1,1,6;2,3,3;p) 
      \mapsto F(1,1,7;2,3,4;p) \\
    &\mapsto F(1,1,6;1,3,4;p) 
      \mapsto F(2,1,5;1,3,4;p) 
      \mapsto F(2,2,4;1,3,4;p) 
      \mapsto F(2,3,3;1,3,4;p) .
  \end{align*}
  On the other hand, 
  Algorithm \ref{algorithm:contiguity} needs only the last three steps. 
\end{remark}

We give the complexity to construct the matrix $C_k (t)$. 
The appendix (Section \ref{sec:appendix}) will help to follow the following argument.
By \cite[Theorem 5.3]{MG}, 
the matrix $U_k^{\pm 1}$ for the contiguity relation is the product of 
five matrices of size 
$r={r_1+r_2-2 \choose r_1-1} =\frac{(r_1+r_2-2)!}{(r_1-1)!(r_2-1)!}$: 
\begin{enumerate}
\item[(a)] one diagonal matrix whose entries are rational functions in $p$, 
\item[(b)] two intersection matrices whose entries are rational functions in $\beta$,
\item[(c)] two inverse matrices of intersection matrices
\end{enumerate}
(cf. Example \ref{example:5}). 
For $U_k^{-1}$, by substituting 
\begin{itemize}
\item $\beta^{(1)}_{k}$ and $\beta^{(1)}_{r_1}$ with certain polynomials in $t$ of degree $1$, 
\item the other $\beta^{(i)}_{j}$'s and $p$ with certain rational numbers, 
\end{itemize}
we obtain the matrix $C_k (t)$. 
By this construction and the formula for (a), (b), (c) in \cite{MG}, 
it turns out that when we construct $C_k (t)$, we treat 
rational functions in $t$ 
whose denominator and numerator are of degree at most $12$. 
As long as we have tried on a computer for cases $5 \times r_i$, $r_i \leq 12$,
the degrees of numerators and denominators are much smaller than $12$ and
no big number (large number so that FFT multiplication algorithms are used) 
appears in the matrix $C_k(t)$;
when we use the modular method, all numbers in the matrix are elements
in a finite field.
Thus, we assume in the following theorem that 
the complexity of arithmetics of polynomials in one variable is $O(1)$. 

\begin{thm}
Let $r_1, r_2 \geq 2$. 
Assume that the complexity of arithmetics is $O(1)$,
the complexities of multiplying two $n\times n$ matrices 
and evaluating the determinant of an $n \times n$ matrix are $O(n^{\omega})$ for some $2\leq \omega <3$. 
The complexity of obtaining the matrix $C_k (t)$ 
in Algorithm \ref{algorithm:contiguity} 
for $r_1 \times r_2$ contingency tables
is $O(r^{\omega})$, where $r = {{r_1+r_2-2} \choose {r_1-1}}$. 
Especially, it is 
\begin{enumerate}
\item $O(r_2^{\omega r_1})$ when $r_1$ is fixed,
\item $O(r_1^{\omega r_2})$ when $r_2$ is fixed, 
\item $O(2^{2\omega r_1})$ when $r_1=r_2$.
\end{enumerate}
\end{thm}

\begin{proof} \rm
  As explained later, the complexity to construct the above matrices 
  (a), (b) and (c) are 
  $O(r_1^{\omega} r)$,  
  $O(r_1^2 r^2)$ and  
  $O(r_1^2 r^2)$, respectively.  
  Since the size of each matrix is $r$, 
  the complexity of multiplication is $O(r^{\omega})$. 
  Thus, the complexity to obtain a contiguity relation is 
  $O(r^{\omega})+O(r_1^{\omega} r)+O(r_1^2 r^2)$. 
  Since $r$ is larger than $r_1^2$ in general, 
  the complexity is equal to $O(r^{\omega})$. 
  \begin{enumerate}
  \item We fix $r_1$ and assume $r_2 \gg r_1$. 
    By the Stirling formula $\log n! \sim n\log n -n$, 
    we have 
    \begin{align*}
      \log r 
      &\sim (r_1 + r_2)\log (r_1+r_2) -r_2 \log r_2 \\
      &=r_1 \log r_2 +r_1 \log \Big( 1+\frac{r_1}{r_2} \Big) +r_2 \log \Big( 1+\frac{r_1}{r_2} \Big)
        \sim r_1 \log r_2 .
    \end{align*}
    Then we obtain $r \sim r_2^{r_1}$ and 
    the complexity is $O(r_2^{\omega r_1})$. 
  \item Claim 2 can be obtained by a similar argument to Claim 1.
  \item If $r_1=r_2$, then by the Stirling formula, we have 
    \begin{align*}
      \log r \sim 2r_1 \log 2r_1 -2r_1 \log r_1 =2r_1 \log 2, 
    \end{align*}
    which implies $r\sim 2^{2r_1}$. 
    Thus, the complexity is $O(2^{2\omega r_1})$.
  \end{enumerate}
  Now, we explain the complexity of obtaining the matrices (a), (b), (c). 
  \begin{enumerate}
  \item[(a)] As \cite[Theorem 5.3]{MG}, each nonzero entry of 
    the diagonal matrix is the ratio of determinants of two $r_1 \times r_1$ matrices. 
    Thus the complexity of evaluation is $O(r_1^{\omega} r)$.  
  \item[(b)] The entries of intersection matrices are 
    intersection numbers of $(r_1 -1)$-th twisted cohomology groups, 
    which can be evaluated by the formula in \cite[Fact 3.2]{MG}. 
    The complexity of evaluating an intersection number by this formula is $O(r_1^2)$, 
    and hence the complexity of obtaining the intersection matrix is $O(r_1^2 r^2)$. 
  \item[(c)] By the proof of \cite[Proposition A.1]{MG}, 
    the inverse matrix of an intersection matrix is expressed as a product of
    two diagonal matrices and one intersection matrix. 
    The complexity of obtaining the diagonal matrices is $O(r_1 r)$, 
    since that of their nonzero entry is $O(r_1)$.
    Therefore, the complexity of obtaining the inverse matrix of the intersection matrix
    is dominated by the complexity $O(r_1^2 r^2)$ of obtaining the intersection matrix. 
  \end{enumerate}
\qed
\end{proof}

In this section we conducted a complexity analysis of the method for obtaining the contiguity
relation.
The theoretical complexity is of a polynomial order when $r_i$ is fixed
and our implementation shows that this step is efficient for 
small sized contingency tables.
However, 
a naive evaluation of the composition of linear transformations
(\ref{eq:composite_of_linear_transformation})
is slow, even for small contingency tables, because of large numbers  
when $|a|$ is large.

\if 0 
\documentclass{article}
\usepackage{amsmath}
\usepackage{amsfonts}
\usepackage[dvipdfmx]{graphicx}
\usepackage{float}
\usepackage{alltt}
\usepackage{color}
\usepackage[top=30truemm,bottom=30truemm,left=25truemm,right=25truemm]{geometry}
\usepackage{url}

\newtheorem{thm}{Theorem}
\newtheorem{prop}{Proposition}
\newtheorem{cor}{Corollary}
\newtheorem{lemma}{Lemma}
\newtheorem{proof}{Proof}
\renewcommand{\theproof}{}
\newtheorem{definition}{Definition}
\newtheorem{algo}{Algorithm}
\newtheorem{example}{Example}
\newtheorem{remark}{Remark}
\newtheorem{conj}{Conjecture}

\def\qed{\hfill $\Box$}
\def\pd#1{\partial_{#1}}

\newtheorem{benchmark}{Benchmark Problem}

\begin{document}
\rightline{ee-note.tex, 2019.05.02--05.13}

\bigbreak
\begin{equation} \label{GLT}
 F(B_k;p) = C_k(\beta_k^{(1)}) \cdots 
\end{equation}
\begin{equation} \label{eq:composition_of_linear_transformation}
 F(a) = M(a) M(a+1) \cdots M(-2) F(-1)
\end{equation}
\fi

\section{Efficient Evaluation of a Composition of Linear Transformations}
\label{sec:modular_and_tricks}

To perform exact and efficient evaluations
by the difference HGM, 
we need a fast and exact evaluation of a composition of linear transformations
for vectors with rational number entries.
This problem has hitherto been explored and there are several implementations,
e.g., LINBOX \cite{linbox}.
For the purposes of empirical application, we study several methods
to evaluate the composition of linear transformations
such as (\ref{eq:composite_of_linear_transformation}) 
or (\ref{GLT}).
Our implementation is published as the package {\tt gtt\_ekn3}
for Risa/Asir \cite{risa-asir}.
The function names in this section are those in this package.

\subsection{Our Benchmark Problems}

In order to compare several methods, we will use the following 4 
benchmark problems.
The timing data are taken on a machine with
 \begin{center}
  \begin{tabular}{ll}
   CPU & Intel(R) Xeon(R) CPU E5-4650 2.70GHz\\
   the number of CPU's & 32\\
   the number of cores & 8\\
   OS & Debian 9.8\\
   memory & 256GB\\
   software system & Risa/Asir (2018) version 20190328 with GMP \cite{GMP}
  \end{tabular}
 \end{center}

\noindent
\begin{benchmark} \label{benchmark22} \rm
Evaluate
$$f={}_2F_1\left(-36N,-11N,2N;\frac{1-\frac{1}{N}}{56}\right),\ N \in \mathbb{N}.$$
It stands for the $2 \times 2$ contingency tables
with the row sums $(36N,13N-1)$ and the column sums $(38N-1,11N)$.
The parameter $(p_{ij})$ is set to 
$\left(\begin{array}{cc}
 1 & \frac{1-1/N}{56} \\
 1 & 1 \\
\end{array}
\right)
$.
\end{benchmark}

\begin{benchmark} \label{benchmark35} \rm
Evaluate the expectation for the $3 \times 5$ contingency tables
with the row sums
$(N,2N,12N)$,
the column sums $(N,2N,3N,4N,5N)$,
and the parameter $p$
$$\left(\begin{array}{ccccc}
 1& \frac{ 1}{ 2}& \frac{ 1}{ 3}& \frac{ 1}{ 5}& \frac{ 1}{ 7} \\
 1& \frac{ 1}{ 11}& \frac{ 1}{ 13}& \frac{ 1}{ 17}& \frac{ 1}{ 19} \\
 1&  1&  1&  1&  1 \\
\end{array}\right)$$
\end{benchmark}

\begin{benchmark} \label{benchmark55} \rm
Evaluate the expectation for the $5 \times 5$ contingency tables
with the row sums
$(4N,4N,4N,4N,4N)$,
the column sums $(2N,3N,5N,5N,5N)$,
and the parameter $p$
$$\left(\begin{array}{ccccc}
 1& \frac{ 1}{ 2}& \frac{ 1}{ 3}& \frac{ 1}{ 5}& \frac{ 1}{ 7} \\
 1& \frac{ 1}{ 11}& \frac{ 1}{ 13}& \frac{ 1}{ 17}& \frac{ 1}{ 19} \\
 1& \frac{ 1}{ 23}& \frac{ 1}{ 29}& \frac{ 1}{ 31}& \frac{ 1}{ 37} \\
 1& \frac{1}{37}  &\frac{ 1}{ 41}& \frac{ 1}{ 43}& \frac{ 1}{ 47} \\
 1&  1&  1&  1&  1 \\
\end{array}\right)$$
\end{benchmark}

\begin{benchmark} \label{benchmark77} \rm
Evaluate the expectation for the $7 \times 7$ contingency tables
with the row sums
$(N,2N,3N,4N,5N,6N,7N)$,
the column sums $(N,2N,3N,4N,5N,6N,7N)$,
and the parameter
$$\left(\begin{array}{ccccccc}
 1& \frac{ 1}{ 2}& \frac{ 1}{ 3}& \frac{ 1}{ 5}& \frac{ 1}{ 7}& \frac{ 1}{ 11}& \frac{ 1}{ 13} \\
 1& \frac{ 1}{ 17}& \frac{ 1}{ 19}& \frac{ 1}{ 23}& \frac{ 1}{ 29}& \frac{ 1}{ 31}& \frac{ 1}{ 37} \\
 1& \frac{ 1}{ 41}& \frac{ 1}{ 43}& \frac{ 1}{ 47}& \frac{ 1}{ 53}& \frac{ 1}{ 59}& \frac{ 1}{ 61} \\
 1& \frac{ 1}{ 67}& \frac{ 1}{ 71}& \frac{ 1}{ 73}& \frac{ 1}{ 79}& \frac{ 1}{ 83}& \frac{ 1}{ 89} \\
 1& \frac{ 1}{ 97}& \frac{ 1}{ 101}& \frac{ 1}{ 103}& \frac{ 1}{ 107}& \frac{ 1}{ 109}& \frac{ 1}{ 113} \\
 1& \frac{ 1}{ 127}& \frac{ 1}{ 131}& \frac{ 1}{ 137}& \frac{ 1}{ 139}& \frac{ 1}{ 149}& \frac{ 1}{ 151} \\
 1&  1&  1&  1&  1&  1&  1 \\
\end{array}\right)$$
\end{benchmark} 

\subsection{Floating Point Arithmetic}
If we can evaluate the composition of linear transformations (\ref{GLT})
accurately over floating point numbers,
we can utilize GPU's or other hardware for efficient evaluation.
Unfortunately, we lose the precision during the iteration
of linear transformations in general.
For example,
let us evaluate the case of $N=100$ for our $2 \times 2$ benchmark problem
\ref{benchmark22}
with double arithmetic.
The output by the double precision floating point arithmetic is 
{\tt 4.08315e+94},
but the answer is 
{\tt 4.48194745579962e+94}
where we use the double value expression in the standard form, e.g.,
{\tt 4.08e+94} means $4.08 \times 10^{94}$.
The output by double has only one digit of accuracy.

\subsection{Intermediate Swell of Integers}
We denote by $M(n)$ the complexity of the multiplication of two $n$-digits
integers.
The book \cite{brent-zimmermann-2010} is a survey on algorithms and complexities
on integer arithmetic.

Arithmetic over $\mathbb{Q}$ is more expensive than arithmetic over $\mathbb{Z}$,
because the reduction of a rational number needs the computation of GCD
of the numerator and the denominator.
The best known complexity of the operation of GCD is
$O(M(n) \log n)$ for two $n$-digits numbers
(see, e.g., \cite{muller-2008}, \cite{brent-zimmermann-2010}).
The complexity of the Euclidean algorithm for GCD is $O(n^2)$
\footnote{Timing data over $\mathbb{Q}$ in the version 1 of this paper at arxiv
is very slow, because asir 2000 uses the Euclidean algorithm for the reductions
in $\mathbb{Q}$ as default. The system asir 2018 based on GMP uses faster GCD algorithms 
as default.}.

One way to avoid reductions in $\mathbb{Q}$ in our interations of linear
transformations (\ref{GLT}) is to evaluate numerators and denominators
separately and compute the GCD of the numerator and the denominator
every $R$ step of the linear transformations.
We will call this sequential method 
{\tt g\_mat\_fac\_int} (generalized matrix factorial over integers).
A reduction performing in every $R$ step is necessary.
In fact, our evaluation problems make intermediate swell of integers
by the method {\tt g\_mat\_fac\_int}.
For example, the table below shows sizes of the numerators and the denominators
by the separate evaluation without the intermediate reduction
in our benchmark problem \ref{benchmark22}; \\[0.5ex]
\begin{tabular}{rrrr}
  N    &   digits of num./den.    &       digits of num./den. after reduction & time\\ \hline
 300  &  $1.97\times 10^5/1.96\times 10^5$  &$3.35\times 10^4/3.28\times 10^4$ & 0.92s\\
 500  &  $3.47\times 10^5/3.47\times 10^5$ & $5.87\times 10^4/5.76\times 10^4$ & 1.56s  
\end{tabular} \\[0.5ex]
After the reduction, the numerators and the denominators become smaller
as shown in the second column of the table.

We have no theoretical estimate for the best choice of $R$ for intermediate reductions.
The Figure \ref{fig:graph-test5-interval} is timing data
of our benchmark problem \ref{benchmark35} with $N=100$.
The horizontal axis is the interval $R$ of the intermediate reduction and
the vertical axis is the timing.
\begin{figure}[tb]
\begin{center}
\includegraphics[width=7cm]{./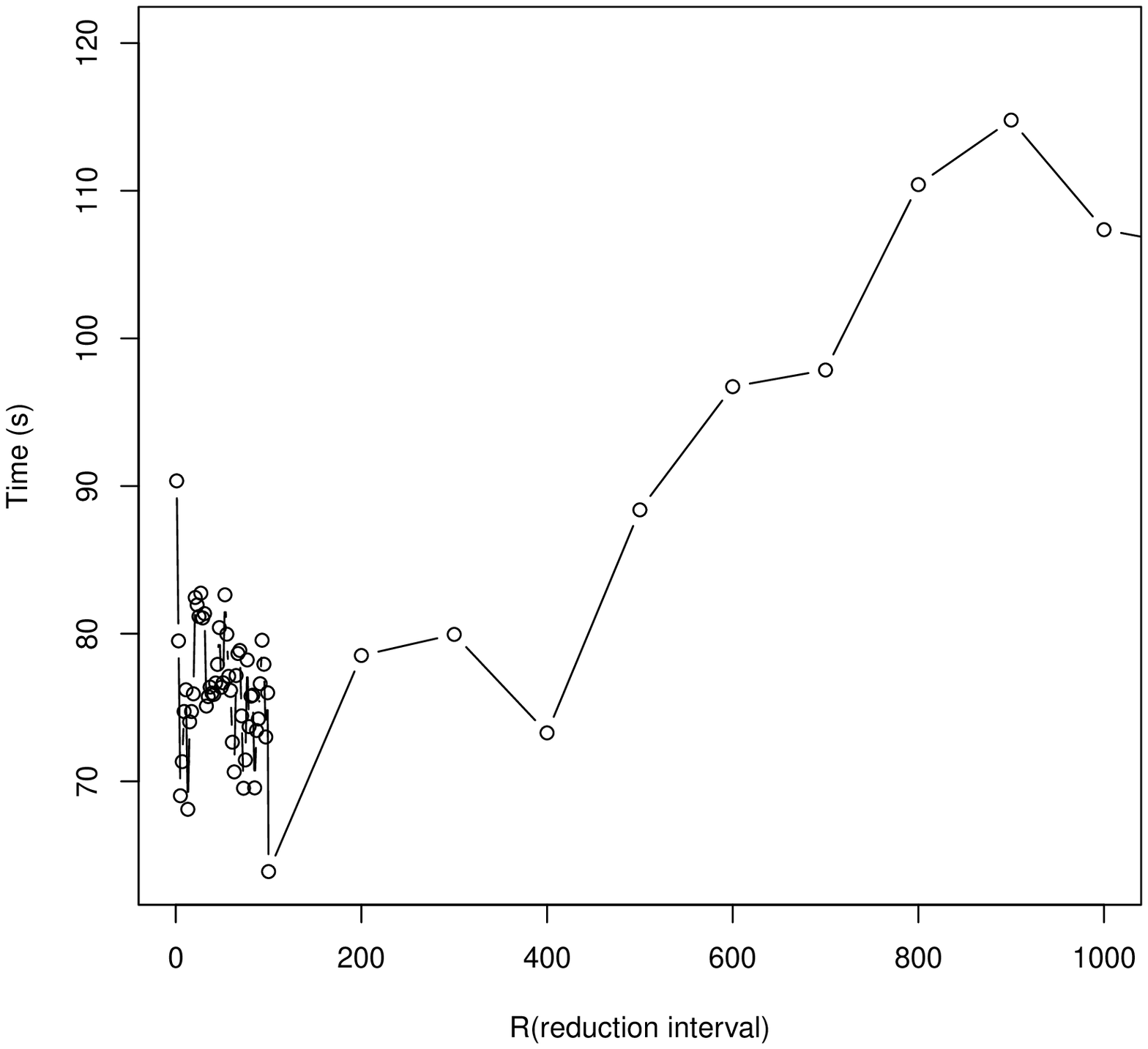}
\end{center}
\begin{center}
\begin{tabular}{r|rrrrrrrr}
$R$ (reduction interval)  & 1 & 3 & 5 & 7 & 9 & 11 & 13 & 15 \\ \hline
Time(s)                   &90.352 & 79.5147 & 69.024 & 71.335 & 74.7312 & 76.2025 & 68.1058 &74.0283 \\
\end{tabular}
\end{center}
\caption{Intermediate reduction} \label{fig:graph-test5-interval}
\end{figure}
The graph indicates that we should choose $R$ such that $ 5 \leq R \leq 100$.

\subsection{Multimodular Method} \label{sec:modular}
It may be standard to use the modular method when we have an intermediate
swell of integers.
We refer to, e.g., \cite{joris2016} and its references 
for the complexity analysis 
on modular methods. 

\begin{algo}[{\tt g\_mat\_fac\_itor} (generalized matrix factorial by itor), 
modular method] \label{algorithm:2}\footnote{
    We use ``itor'' as an abbreviation of the procedure {\tt IntegerToRational}. 
}~

Input: $M(k)$ (matrix), $F$ (vector), $S<E$ (indices), $P_{\rm list}$ (a list of prime numbers), 
$C_{\rm list}$ (a list of processes for a distributed computation).

Output: A candidate value of $M(E)\cdots M(S+2)M(S+1)M(S)F$ or ``failure''.

\begin{enumerate}
 \item Let $F_n$, $F_d$ (scalar), $M_n$, $M_d$ (scalar) be numerators and denominators of $F$ and $M$
       respectively.
 \item For each prime number $P_{i}$ in $P_{\rm list}$, 
       perform the linear transformations
       $ \prod_{i=0}^{E-S}(M_n(S+i) M_d(S+i)^{-1}) F_n F_d^{-1}$
       of $F$ over ${\mathbb F}_{P_{i}}$.
       If the integer $F_d$ or $M_d$ is not invertible modulo $P_{i}$ (unlucky case),
       then skip this prime number $P_{i}$ and set $P_{\rm list}$ to
       $P_{\rm list} \setminus \{ P_i \}$.
       Let the output be $G_{i}$.
       This step may be distributed to processes in the $C_{\rm list}$.
 \item Apply the Chinese remainder theorem to construct a vector $G$ over 
       $\mathbb{Z}/ P\mathbb{Z}$
       satisfying  $G\equiv G_{i} \mod P_{i}$
       where $P=\prod_{P_{i}\in P_{\rm list}}P_{i}$.
 \item Return a candidate value by the procedure {\tt IntegerToRational}$(G,P)$
(rational reconstruction).
\end{enumerate}
\end{algo}

The complexity of the modular method {\tt g\_mat\_fac\_itor}
is estimated as follows.
\begin{thm}\label{itorbc}
Let $n$ be the number of the linear transformations
and the size of the square matrix 
$\displaystyle r=\binom{r_1+r_2-2}{r_1 -1}$.
Suppose that each prime number $P_i$ is $d_p$ digits number
and we use $N_p$ prime numbers.
$C$ is the number of processes. 
The complexity of {\tt g\_mat\_fac\_itor} is approximated as
 $${\rm max}\left\{ O\left(\frac{nr^2N_pM(d_p)}{C}\right),
 O\left(r (d_p N_p)^2\right)\right\}$$
when $n$ is in a bounded region where the rational reconstruction succeeds
and the asymptotic complexity of the Chinese remainder theorem approximates well
the corresponding exact complexity in the region.
\end{thm}

\begin{proof} \rm
We estimate the complexity of each step of {\tt g\_mat\_fac\_itor}.
\begin{enumerate}

\item The complexity of one linear transformation is $O(r^2M(d_p))$.
 The linear transformation is performed $n$ times for $N_p$ prime numbers.
 Then the complexity is $O(nr^2N_pM(d_p))$ on a single process.
 This step can be distributed into $C$ processes, then the complexity
 is  
 $O(\frac{nr^2N_pM(d_p)}{C})$.

\item   The complexity  to find an integer $x$ such that 
      $x \equiv x_i \mod p_i\ (i=1, \ldots, N_p)$
 is discussed in \cite[Theorem 6]{joris2016} under the assumption that an inborn FFT scheme is used.
It follows from the estimate that the reconstruction complexity $C_n(N_p)$ 
of $N_p$ primes of $d_p$ digits is bounded by
$ (2/3 + o(1)) M(d_p N_p) \, {\rm max}\, 
\left( \frac{\log N_p}{\log \log (d_p N_p)}, 1+ O(N_p^{-1}) \right)
$

\item The rational reconstruction algorithm {\tt IntegerToRational},
see, e.g., \cite{gathen-gerhard}, \cite{itor}, is a variation of the Euclidean 
  algorithm and its complexity is bounded by $O((N_p d_p)^2)$.
We have $r$ numbers to reconstruct.
\end{enumerate}
Since the complexity of the step 2 is smaller than other parts, we obtain 
the conclusion.
\end{proof}

\begin{figure}[tb]
\begin{center}
\includegraphics[width=7cm]{./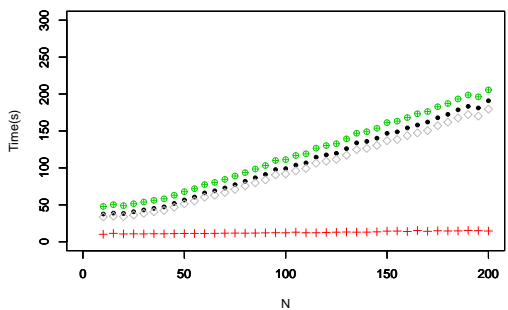}
\end{center}
\caption{$5 \times 5$ contingency table, the benchmark problem \ref{benchmark55}
with $32$ processes}
\label{fig:graph-time2-crt-5-32}
\end{figure}

The complexity is linear with respect to $n$ (which is proportional to the size of the marginal sum vector in our benchmark problems)
when the first argument of the ``max'' in the theorem is dominant.
However, when $n$ becomes larger,
the rational reconstruction fails or gives a wrong answer.
This is the reason why we give the assumption that $n$ is in a bounded region.
Note that the complexity estimate in the theorem is not an asymptotic complexity
and is an approximate evaluation of it.

Let us present an example that this approximate evaluation works.
Figure \ref{fig:graph-time2-crt-5-32} is a graph of the timing data
for the benchmark problem \ref{benchmark55} with $N_p = 400$
and $d_p=100$ by the decimal digits.
The top point graph is the total time, the second top point graph is
the time of the generalized matrix factorial
(the execution time of Algorithm \ref{algorithm:2}), 
the third point graph is the time
of the distributed generalized matrix factorial by modulo $P_i$'s
(the step 2 of Algorithm \ref{algorithm:2}).
The last point graph is the time to obtain contiguity relations.
Contiguity relations for several directions are obtained by distributing
the procedures into $32$ processes.
Note that the point graph is linear with respect to $N$,
which is proportional to the number of the linear transformations $n$.
The timing data imply that the first argument of ``max'' of Theorem \ref{itorbc}
is dominant in this case.
In fact, when $N=200$, the step for reconstructing rational numbers
only takes about 8 seconds and linear transformations over finite fields
take from 35 seconds to 52 seconds.

We should ask if our multimodular method is efficient on real computer
environments.
The following table is a comparison of timing data
of the sequential method {\tt g\_mat\_fac\_int} (with a distributed computation
of contiguity relations by $32$ processors)
and the multimodular method 
{\tt g\_mat\_fac\_itor} by $32$ processors
for the benchmark problem \ref{benchmark55}. \\[0.5ex]
\begin{tabular}{l|rr}
N                                             &  90   &  200    \\ \hline
{\tt g\_mat\_fac\_int} with the reduction interval $R=100$  & 21.57 &  45.40  \\
{\tt g\_mat\_fac\_int} without the intermediate reduction & 68.17 &  227.23  \\
{\tt g\_mat\_fac\_itor} by $32$ processors    & 103.23 & 205.57 \\
\end{tabular} \\[0.5ex]
Unfortunately, the multimodular method is slower than the sequential method
{\tt g\_mat\_fac\_int} with a relevant choice of $R$
on our best computer, 
however it is faster than the case of a bad choice of $R=\infty$.

\begin{figure}[tb]
\begin{center}
\includegraphics[width=7cm]{./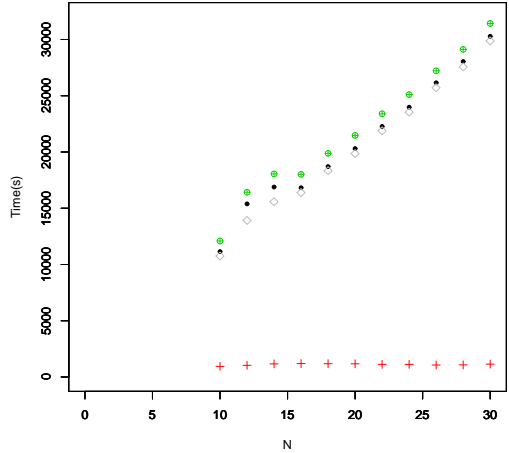}
\end{center}
\caption{$7 \times 7$ contingency table, the benchmark problem \ref{benchmark77}
with $32$ processes}
\label{fig:graph-time2-crt-7-32}
\end{figure}

When the size of contingency table becomes larger, 
the rank $r$ becomes larger rapidly.
For example, $r=20$ for the $5 \times 5$ contingency tables
and $r=924$ for the $7 \times 7$ contingency tables.
The Figure \ref{fig:graph-time2-crt-7-32} shows timing data of our benchmark
problem \ref{benchmark77} of $7 \times 7$ contingency tables
with the multimodular method by $32$ processors.
We can also see linear timing with respect to $N$,
but the slope is much larger than the $5 \times 5$ case
as shown in our complexity analysis.

\subsection{Binary Splitting Method}
It is well-known that
the binary splitting method for the evaluation of the factorial $m!$
of a natural number $m$
is faster method than a naive evaluation of the factorial
by $m! = m \times (m-1)!$.
The binary splitting method evaluates
$ m (m-1) \cdots (\lfloor m/2 \rfloor+1)$ and $\lfloor m/2 \rfloor (\lfloor m/2 \rfloor-1) \cdots 1$
and obtains $m!$.
This procedure can be recursively executed.
This binary splitting can be easily generalized to our generalized
matrix factorial; 
we may evaluate, for example, 
$M(a) M(a+1) \cdots M(\lfloor a/2 \rfloor-1)$
and
$M(\lfloor a/2 \rfloor) \cdots M(-2)$
to obtain  $M(a) M(a+1) \cdots M(-2)$, $a < -2$
in (\ref{eq:composite_of_linear_transformation}).
This procedure can be recursively applied.
However, what we want to evaluate is the application of the matrix
to the vector $F(-1)$.
The matrix multiplication is slower than the linear transformation.
Then, we cannot expect that this method is efficient for our problem
when the size of the matrix is not small and the length of multiplication
is not very long.
However, there are cases
that the binary splitting method is faster.
Here is an output by our package {\tt gtt\_ekn3.rr}.
{\footnotesize
\begin{verbatim}
[1828] import("gtt_ekn3.rr")$
[4014] cputime(1)$
0sec(1.001e-05sec)
[4015] gtt_ekn3.expectation(Marginal=[[1950,2550,5295],[1350,1785,6660]],
                    P=[[17/100,1,10],[7/50,1,33/10],[1,1,1]]|bs=1)$ //binary splitting
3.192sec(3.19sec)
[4016] gtt_ekn3.expectation(Marginal,P)$
4.156sec(4.157sec)
\end{verbatim}
}

\subsection{Benchmark of Constructing Contiguity Relations}
We gave a complexity analysis of finding contiguity relations.
When $r_1$ is fixed, it is $O(r_2^{3r_1})$.
\begin{figure}[tb]
\begin{center}
\includegraphics[width=7cm]{./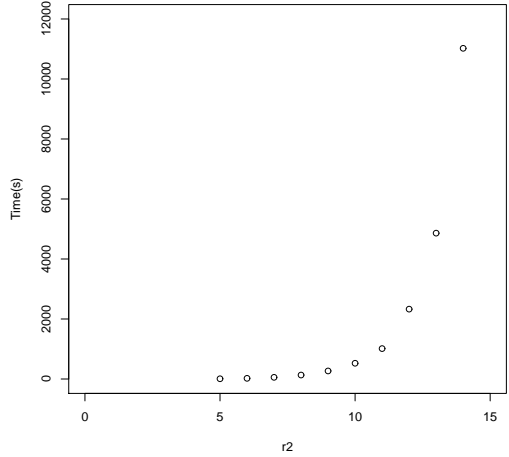}
\end{center}
\begin{center}
\end{center}
\caption{Time to obtain contiguity relations} \label{fig:graph-time-upAlpha}
\end{figure}
The Figure \ref{fig:graph-time-upAlpha} shows timing data to obtain 
contiguity relations for $5 \times r_2$ contingency tables
where the parameter $p$ is 
$\left(\begin{array}{ccccc}
1 & 1     & 1    & \cdots  & 1 \\
1 & 1/p_1 & 1/p_2 & \cdots  & 1/p_{r_2-1} \\
1 & 1/p_{r_2} & 1/p_{r_2+1} & \cdots & 1/p_{2(r_2-1)} \\
1 & \cdots \\
1 & 1/p_{(r_1-1)(r_2-1)+1} & \cdots \\
\end{array}\right)
$
($p_i$ is the $i$-th prime number),
the row sum vector is $(a_1,400,400,400,400)$,
and
the column sum vector is $(200,300,500,500, \ldots, 500)$.
As is shown by our complexity analysis, when $r_2$ becomes larger,
it rapidly becomes harder to obtain contiguity relations.

\if 0

\end{document}
\fi

\section{Zero Cells} \label{sec:zero}

The contiguity relations derived by \cite{MG} are valid
only when there are no zero cells in the contingency table.
If there is a zero ($p_{ij}=0$ and $u_{ij}=0$) in the contingency table, 
a denominator of the contiguity relation is zero in general
and therefore we cannot use their identity.
One method to avoid this difficulty is interpolation.
Note that the normalizing constant $Z$ is a rational function
in $p_{ij}$ and the expectation 
$E[U_{ij}] = p_{ij}\frac{\partial \log Z}{\partial p_{ij}}$
is also a rational function.
Because it is a rational function, we can obtain the exact value
by evaluating it on a sufficient number of rational $p_{ij}$'s.
\begin{prop}
Let $\beta$ be the marginal sum vector and $L$ a generic line in $p$-space.
If we evaluate $E[U_{ij}]$ at $2 \beta_1$  points 
$p \in {\mathbb R}_{>0}^{r_1 \times r_2}$ on a line $L$,
then the exact value of $E[U_{ij}]$ can be obtained 
at any point on $L$.
\end{prop}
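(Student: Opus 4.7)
The plan is to reduce the claim to a one-variable rational interpolation problem along the line $L$. Recall the formula $E[U_{ij}] = p_{ij}\, \partial \log Z / \partial p_{ij} = p_{ij}\,(\partial Z/\partial p_{ij})/Z(\beta;p)$, so $E[U_{ij}]$ is globally a ratio of two polynomials in the $p$-variables. Restricting $p$ to an affine parametrization $p(t) = a + tb$ of $L$ turns this into a rational function of one variable, $R(t) = N(t)/D(t)$, and the task becomes: reconstruct $R$ from $2\beta_1$ generic evaluations.

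The crucial step is a sharp degree bound of the form $\deg N, \deg D \le \beta_1$. The naive bound coming from the total degree $|u| = \sum_i \beta^r_i$ of each monomial in $Z(\beta;p) = \sum_{Au^f=\beta} p^u/u!$ is far too weak. However, every term of this sum shares a common monomial factor $\prod_{i,j} p_{ij}^{m_{ij}}$, where $m_{ij}$ is the minimum of $u_{ij}$ over the fiber $\{u : Au^f = \beta\}$; pulling this factor out of both numerator and denominator of $E[U_{ij}]$ leaves a residual rational function of much smaller degree. In the prototype of Section 3 this cancellation is exactly what turns $Z$ into a ${}_2F_1$ polynomial of degree $\min(u_{11},u_{22}) \le \beta_1$, and the analogous cancellation for a general $r_1 \times r_2$ table, applied along a generic line, should yield the uniform bound $\beta_1$.

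With the degree bound in hand, I would finish by standard rational (Cauchy) interpolation: the set of rational functions $N(t)/D(t)$ with $\deg N, \deg D \le \beta_1$ is parametrized by $2\beta_1 + 1$ coefficients, one of which is absorbed by a choice of normalization, so $2\beta_1$ generic interpolation conditions determine $R(t)$ uniquely through a Vandermonde-type linear system. Evaluating the reconstructed $R(t)$ at any desired $t$ then returns the exact value of $E[U_{ij}]$ at the corresponding point of $L$. The genericity hypothesis on $L$ is used twice: to ensure that $L$ meets the positive orthant at enough rational points for the sampling step, and to ensure that the common monomial factor, together with $D$, does not vanish identically on $L$.

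The main obstacle I foresee is the sharp degree bound. Verifying that $\beta_1$ is a bound valid uniformly over all cells $(i,j)$, and specifying precisely the genericity conditions on $L$, requires a careful analysis of the $A$-hypergeometric polynomial $Z(\beta;p)$ that goes beyond the $2 \times 2$ computation reviewed in Section 3; once this bookkeeping is done, the rational interpolation step is routine linear algebra.
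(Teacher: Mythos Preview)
Your plan coincides with the paper's: restrict $E[U_{ij}]$ to the line to obtain a one-variable rational function, bound the numerator and denominator degrees by $\beta_1$, and reconstruct by rational interpolation. The paper's proof is in fact terser than yours---it simply asserts the degree bound and then cites the Stoer--Bulirsch rational interpolation algorithm---so the ``main obstacle'' you flag is no more of a gap here than in the published argument.

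Two small remarks. First, your parameter count is off by one: numerator and denominator of degree at most $\beta_1$ carry $2(\beta_1+1)$ coefficients, hence $2\beta_1+1$ free parameters after normalization, not $2\beta_1$; the paper's statement uses the same count, so this is evidently meant as an order-of-magnitude bound rather than a sharp one. Second, the common-monomial cancellation you propose lowers the degree along a generic affine line only to $N-\sum_{i,j} m_{ij}$, which in general still exceeds $\beta_1$ (already in the $2\times2$ example $\beta=(5,7,8,4)$ one gets degree $8>5$). The bound $\beta_1$ is, however, immediate for a coordinate line varying a single first-row entry $p_{1j}$, since $\deg_{p_{1j}}Z\le\max_u u_{1j}\le\beta_1$; that axis-parallel direction is exactly what one uses in practice when interpolating toward a prescribed zero cell, and is presumably what the paper has in mind.
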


\begin{proof} \rm
When we restrict $E[U_{ij}]$ to the line $L$,
it is a rational function in one variable.
The degree of the denominator and the numerator is $\beta_1$ at most.
Apply an interpolation algorithm by rational function,
e.g., Stoer-Bulirsch algorithm \cite{SB}, \cite{recipes}.
Then, we can obtain the exact value by interpolation.
\qed
\end{proof}

\begin{example}  \rm
Let the marginal sums and the parameter $p$ (cell probability) be
$$
\begin{array}{ccc|c}
* & * & * & 3 \\
* & * & * & 4 \\
* & * & * & 3 \\ \hline
3 & 4 & 3 &
\end{array},
p = \left(
\begin{array}{ccc}
1 & 1/2 & 0 \\
1 & 1/3 & 1/4 \\
1 &1 &1 \\
\end{array}
\right)
$$
Then, we can evaluate the expectation matrix $(E[U_{ij}])$
by the difference HGM and interpolation.
Below is an output of our package {\tt gtt\_ekn3}.
Here the {\tt randinit} parameter specifies an interval of random non-zero
$p_{ij}$'s where $(i,j)$'s are positions of zero cells.
{\footnotesize
\begin{verbatim}
[5150] import("gtt_ekn3.rr");
0
[5151] E=gtt_ekn3.cBasistoE_0(0,[[3,4,3],[3,4,3]],[[1,1/2,0],[1,1/3,1/4],[1,1,1]] | randinit=20);
[ 71076/56575 98649/56575 0 ]
[ 157581/113150 28069/22630 77337/56575 ]
[ 39717/113150 114957/113150 92388/56575 ] 
// Expectation (exact value)
[5153] number_eval(E);  // Expectation (approximate value)
[ 1.25631462660186 1.74368537339814 0 ]
[ 1.39267344233319 1.2403446752099 1.36698188245692 ]
[ 0.351011931064958 1.01596995139196 1.63301811754308 ]
\end{verbatim}
}
\end{example}

Although the interpolation method is applicable to any pattern of $0$-cells,
a more efficient method involves utilizing 
hypergeometric functions restricted on some $p_{ij}=0$'s. 
In general, contiguity relations and Pfaffian systems for such hypergeometric functions become complicated. 
In \cite{goto-0}, a method is put forward to evaluate intersection numbers 
and contiguity relations when only one $p_{ij}$ is zero.

\section{Sufficient Statistics as $\sigma$-algebra } \label{sec:sufficient}
It is often that we decompose parameters for contingency tables
into row and column probabilities and odds ratios.
When only odds ratios are the parameters of interest,
CMLE is an appropriate method to estimate those odds ratios.
However, this decomposition is no longer elementary when contingency tables
contain zero cells.
To facilitate a mathematically clear discussion of CMLE in the next section,
we offer a new formulation of parameters of interest,
nuisance parameters, and sufficient statistics.

Classical formulations of sufficient statistics as $\sigma$-algebras
appear in, e.g., \cite{billingsley1995probability}, \cite{Landers-Rogge1972}.
Our formulation is different because we treat parameters as random variables
instead of considering a family of probability measures.
This Bayesian statistical approach enable us to consider
$\sigma$-algebras on parameter spaces.
We express nuisance parameters and parameters of interest as
sub $\sigma$-algebras of the $\sigma$-algebra generated by all parameters and data.
A Bayesian approach to sufficient statistics is presented in, e.g.,
Chapter 2 of the text book by M.Schervish \cite{Schervish}.
This text book studies sufficient statistics by conditional probabilities
given parameter valued random variables.
We study them by a more general approach of conditional expectations 
given $\sigma$-algebras.
The technical details are lengthy to be precise and, in this section and the next section,
we state only fundamental notions and theorems which we need to study
two way contingency tables. 
Proofs for them are given in the preprint of this paper at arxiv
\footnote{\url{https://arxiv.org/abs/1803.04170}}.
A general framework of the theory will be given in 
\cite{koyama-sufficient}.

The treatment of nuisance parameters and parameters of interest
is an important issue in statistics.
The distinction between those parameters which are of interest
versus those which are nuisance, may seem easy.
In fact, it seems to be only a matter of declaring
that $\mu$ is a parameter of interest
or $\nu$ is a nuisance parameter.
As we will see in the next section,
when a group acts on parameter spaces
and the group is regarded as
the space of nuisance parameters, the distinction
between them is not trivial.
From a geometric perspective,
the cause of this difficulty is that
determining whether a parameter is ``of interest'' or a ``nuisance'' depends on a coordinate system.
To formulate the ``of interest'' notion
independently of a specific coordinate system,
we will consider $\sigma$-algebras on parameter spaces.
In probability theory and stochastic processes,
$\sigma$-algebra is important as a natural way to express information
(see, e.g. \cite{karatzas1988brownian}).
Discussions in this section are based on
conditional expectations with respect to $\sigma$-algebra.
For basic properties of conditional expectation,
see \cite{williams1991probability}.

Let $\Theta$ be a set.
The set $\Theta$ stands for the parameter spaces.
Let $\mathcal B(\Theta)$ be a $\sigma$-algebra on $\Theta$,
then $(\Theta, \mathcal B(\Theta))$ is a measure space.
In the case where $\Theta$ is a topological space,
we assume that $\mathcal B(\Theta)$ is the Borel algebra on
$\Theta$.

In standard parameter estimation,
we assume a probability space $(\Omega',\mathcal F',\Prob_c')$
with a parameter $c \in \Theta$.
Let us define our probability space from the standard setting.
Suppose $(\Theta,\mathcal B(\Theta),\mu)$
is a probability space.
Put $\Omega:=\Omega'\times\Theta$.
Let $\mathcal F$ be the $\sigma$-algebra on $\Omega$
generated by
$$
A\times B:=
\{(\omega,c)\in\Omega\vert \omega\in A,\, c\in B\}
\quad (A\in\mathcal F',\, B\in\mathcal B(\Theta)).
$$
The measurable space  $(\Omega,\mathcal F)$ is deemed to be
the product measurable space
of $(\Omega',\mathcal F')$ and 
$(\Theta,\mathcal B(\Theta))$
\cite[p75,Lemma]{williams1991probability}.
For $A\in\mathcal F'$, let $f_A:\Theta\rightarrow\mathbf R$
be the function defined by
$
f_A(c):= \int_A \Prob_c'(d\omega)
\,(c\in\Theta).
$
If $f_A$ is $\mathcal B(\Theta)$-measurable
for any $A\in\mathcal F'$,
we can define a measure $\Prob$ on $\mathcal F$ by
$
\Prob(A\times B):=\int_B f_A(c)\mu(dc)
\,(A\in\mathcal F',\,B\in\mathcal B(\Theta))
$.
Thus, our probability space is defined as the product space
under the measurable condition of $f_A$.

Let $\theta$ be a measurable map from $\Omega$ to $\Theta$
defined by
$$ \theta: \Omega \ni (\omega',c) \mapsto c \in \Theta. $$
This implies that parameters can be regarded as a $\Theta$-valued random variable.
Although random variables are usually denoted by capital letters,
we use lower case letters to denote random variables that are regarded as parameters.
\begin{example}\rm
  Let $(\Omega',\mathcal F',\Prob_c')$ be
  the probability space $(\mathbf R, \mathcal B(\mathbf R), N(\mu,\sigma^2))$,
  where $N(\mu,\sigma^2)$ is the Gaussian distribution on $\mathbf R$
  with mean $\mu$ and variance $\sigma^2$.
  In this case, the parameter space is
  $\Theta=\{(\mu,\sigma^2)\in\mathbf R^2\vert \sigma^2>0 \}$
  and the parameter $\theta$ as a measurable map is defined by
  $$
  \theta: \Omega\ni (x,(\mu,\sigma^2))\mapsto (\mu,\sigma^2)\in\Theta.
  $$
\end{example}

We restart from 
a probability space $(\Omega,\mathcal F,\Prob)$,
which is not necessarily a product space.
For a sub $\sigma$-algebra $\mathcal G$ of $\mathcal F$,
we use $\mathcal L^1(\mathcal G)$ to denote
the linear space of random variables
which are integrable and $\mathcal G$-measurable.
When two elements $X$ and $Y$ of $\mathcal L^1(\mathcal G)$
satisfy $X(\omega)=Y(\omega)$ for all $\omega\in\Omega$,
we say that $X$ and $Y$ are equal and denote $X=Y$.
Note that $X=Y$ almost surely does not imply that $X=Y$.
Let $\vartheta$ be the sub $\sigma$-algebra of $\mathcal F$ generated 
by a random variable $\theta$.
It represents the information of $\theta$.
We formulate notions of nuisance parameters, sufficient parameters, 
and parameters of interest
as sub $\sigma$-algebras of $\vartheta$.

For a  pair of random variables $X$ and $Y$,
$Y$ is $\sigma(X)$-measurable if and only if
$Y$  equals to $f(X)$ for a Borel measurable function $f$.
See, e.g., \cite[p206]{williams1991probability}.
\if0
In other words, we have the following proposition:

\begin{lemma}\label{b20190524}
For a random variable $X\in\mathcal L^1$,
we have 
\begin{equation}\label{a20190524}
\mathcal L^1(\sigma(X))
=\{f(X)\,\vert\, f\in \mathrm m\mathcal B(\mathbf R),\, \E{f(X)}<\infty\}.
\end{equation}
Here, $\mathrm m\mathcal B(\mathbf R)$ is
the set of all of Borel measurable functions on $\mathbf R$.
\end{lemma}

\begin{proof} \rm

Since the composition of measurable functions is also measurable,
the left-hand side of Equation \eqref{a20190524} includes the hand side.

Suppose $Y\in\mathcal L^1(\sigma(X))$.

In the case where Y is an indicator function,
$A:=\{Y=1\}\in\sigma(X)$ implies
that there exists $B\in\mathcal B(\mathbf R)$
such that $A=\{X\in B\}$.
Put $f:\mathbf R\rightarrow\mathbf R$ as 
$$
f(x):=
\begin{cases}
  1 & (x\in B)\\
  0 & (x\in B^c)
\end{cases}
\quad (x\in\mathbf R),
$$
then $f$ is Borel measurable,
and $Y=f(X)$ and $\E(f(X))=\P(A)<\infty$ hold.
Hence $Y$ is an element of the right-hand side of
Equation \eqref{a20190524}.

In the case where
$Y$ is a linear combination of indicator functions $Y_1, \dots , Y_n$,
i.e., $Y=c_1Y_1+\cdots+c_nY_n$ holds for some $c_i\in\mathbf R$,
there exists $f_i\in\mathbf m\mathcal B(\mathbf R)$ such that $Y_i=f_i(X)$.
Then, $f:=c_1f_1+\cdots+c_nf_n$ is
an element of the right-hand side of Equation \eqref{a20190524}.

In the case where $Y\geq 0$,
take a sequence of random variables $\{Y_n\}$ as follows:
\begin{itemize}
\item each $Y_n$ is a linear combination of indicator functions.
\item $\{Y_n\}$ is monotonically increasing.
\item $Y_n \rightarrow Y\,(n\rightarrow\infty)$.
\end{itemize}
For each $n$, take $f_n\in\mathbf m\mathcal B(\mathbf R)$
such that $Y_n=f_n(X)$,
and put $f:=\sup f_n$.
Then, we have $Y=f(X)$ and $f\in\mathrm m\mathbf B(\mathbf R)$.

For general $Y$, decompose as $Y=Y_+ - Y_-\,(Y_+\geq 0, Y_-\geq 0)$.
Then there exist $f_+$ and $f_-$ such that
$Y_+=f_+(X)$ and $Y_-=f_-(X)$.
Then, $f:=f_+-f_-$ holds
$Y=f(X)$ and $f\in\mathrm m\mathbf B(\mathbf R)$.
\end{proof}

\begin{remark} \rm
  Theorem A.~41 in \cite{Schervish} may seem like to
  a generalization of this lemma.
  However, for $Y\in\mathcal L^1(\sigma(X))$,
  this theorem implies the existence of a function $f$
  {\it only on the image of $X$} such that $Y=f(X)$,
  while Lemma \ref{b20190524} gives the existence of a function $f$
  on $\mathbf R$.
\end{remark}
\fi

Let $X$ and $Y$ be ${\mathbf R}$-valued random variables
and $\theta$ be a $\Theta$-valued random variable,
which we will call a parameter.
We assume that $X$ is integrable.
The conditional expectation $\E{X\vert Y,\theta}$
can be regarded as a function of $(Y,\theta)$, i.e., 
we can take a Borel measurable function $f$
from $\mathbf R\times\Theta$ to $\mathbf R$
such that 
$$
f(Y,\theta) = \E{X\vert Y,\theta}
\quad a.s.
$$

Because the equation $f(y,c_1)=f(y,c_2)$ may hold
even if $c_1\neq c_2$,
the conditional expectation $\E{X\vert Y,\theta}$ can be measurable
with respect to a sub $\sigma$-algebra strictly smaller than $\sigma(Y,\theta)$.
This suggests that taking conditional expectation can reduce
the information of $\theta$.

Let us express this loss of information of $\theta$
in terms of $\sigma$-algebra.
Let $\ObservedData$ and $\mathcal G$ be a sub $\sigma$-algebras of $\mathcal F$.
In some applications,
such as 
Theorem \ref{b25Dec2017} discussed later, 
it is assumed that
$\ObservedData$ is the sub $\sigma$-algebra
generated by all observable statistics
and
$\mathcal G$ is a sub $\sigma$-algebra generated
by a fraction of the observable statistics and a fraction of the parameters.
Note that $\mathcal G$ may include some information of parameters.
For $X\in\mathcal L^1(\ObservedData)$, the conditional expectation
$\E{X\vert\mathcal G}$ can be measurable for a sub $\sigma$-algebra
which is strictly smaller than $\mathcal G$.

\begin{definition}\label{b19Dec2017} \rm
  Sub $\sigma$-algebra $\mathcal I$ is said to be {\it of interest}
  with respect to a pair of sub $\sigma$-algebras $(\ObservedData, \mathcal G)$
  if, for all $X\in\mathcal L^1(\ObservedData)$, a version of
  $\E{X\vert\mathcal G}$ is $\mathcal I$-measurable.
\end{definition}

Notions of nuisance and sufficiency describe
a special case of such information loss.
\begin{definition}\label{a14Nov2017}
  \rm
  Let $\ObservedData$, $\mathcal S$ and $\mathcal N$ be
  sub $\sigma$-algebras  of $\mathcal F$.
  When $\mathcal S$ is of interest with respect to
  $(\ObservedData, \sigma(\mathcal S,\mathcal N))$,
  we deem that $\mathcal S$ is sufficient for $(\ObservedData,\mathcal N)$
  or that $\mathcal N$ is nuisance for $(\ObservedData,\mathcal S)$.
\end{definition}

\begin{remark} \rm
  Note that the condition of Definition \ref{a14Nov2017}
  is equivalent to stating that the equation
  \begin{equation}\label{20171004b}
    \E{X\vert\sigma(\mathcal S,\mathcal N)}
    =\E{X\vert\mathcal S}
    \quad\almostsurely
  \end{equation}
  holds for any $X\in\mathcal L^1(\ObservedData)$.
  In fact, we have
  \begin{align*}
    \E{X\vert\sigma(\mathcal S,\mathcal N)}
    &=
    \E{\E{X\vert\sigma(\mathcal S,\mathcal N)}\vert \mathcal S}
    &&\(\because \E{X\vert\sigma(\mathcal S,\mathcal N)}\in\mathcal L^1(\mathcal S)\)
    \\&=
    \E{X\vert\mathcal S}
    &&\(\because \text{tower property}\).
  \end{align*}
\end{remark}

\begin{remark}\label{a22Nov2017} \rm
In statistics,
a statistic $T$ is sufficient with respect to a parameter $\theta$
if the conditional distribution of observed data $X$
given the statistic $T=t$ does not depend on the parameter $\theta$.
This condition is formally expressed as 
$$
p(x\vert t,\theta) = p(x\vert t).
$$
In similar tests and the Neyman--Scott Problem,
$\theta$ is denoted as a nuisance parameter or an uninteresting parameter
\cite{amari2016information}.
We express this condition in terms of the measure theory
in Definition \ref{a14Nov2017}.
In our definition, we use $\sigma$-fields instead of
statistics and parameters.
Traditional definitions can be reduced
to our definition by
\begin{align*}
\ObservedData &=\sigma\(X\), &
\mathcal S &= \sigma\(T\), &
\mathcal N &=\sigma\(\theta\).
\end{align*}
Intuitively, 
$\ObservedData$, $\mathcal S$, and $\mathcal N$
denote the information of
the observed data, the sufficient statistics, and the nuisance parameters,
respectively.

In addition, we utilize conditional expectations
instead of conditional probabilities
because the latter
can only be defined for a limited class of probability space and conditions.

Fundamental theorems on sufficient statistics can be generalized in our
formulation on the sufficient sigma field \cite{koyama-sufficient}.
\end{remark}

\begin{example}\label{a25Dec2017} \rm
  For random variables  $X_1,\dots,X_n,\theta$,
  suppose that 
  \begin{enumerate}
  \item $0\leq\theta\leq 1$
  \item The conditional probability of $X_1,\dots, X_n$
    for given $\theta$ is 
    $$
    \Prob\left(X_1=x_1,\dots, X_n=x_n\vert \theta\right) =
    \prod_{i=1}^n \theta^{x_i}\(1-\theta\)^{1-x_i}
    \quad (x_i\in\{0,1\})
    $$
  \end{enumerate}
  Then, putting
  $\ObservedData:= \sigma(X_1,\dots,X_n)$,
  $\mathcal N := \sigma(\theta)$,
  $\mathcal S:= \sigma(X_1+\cdots+X_n)$,
Hence,  $\mathcal S$ is sufficient for $(\ObservedData,\mathcal N)$.

In order to clarify our formulation by the $\sigma$-algebra,
we will prove that ${\cal S}$ is sufficient.
For $x=(x_1,\dots,x_n)^\top\in\mathbf R^n$,
we denote by $|x|$ the sum of elements of $x$.
Put $X:=(X_1,\dots, X_n)^\top$ and $T:=|X|=X_1+\cdots+X_n$.
For any $Y\in\mathcal L^1(\mathcal D)$,
we can take a Borel measurable function $f:\mathbf R^d\rightarrow\mathbf R$
such that $Y=f(X)$.
Let $g:\{0,1,\dots,n\}\rightarrow\mathbf R$ be a function defined by
$$
g(t):=   \binom{n}{t}^{-1}\sum_{x\in\{0,1\}^n}\delta_{t,|x|} f(x).
$$
Then, $g(T)$ is $\mathcal S$-measurable.
For any $B, C\in\mathcal B(\mathbf R)$, we have
\begin{align*}
  &
  \E{Y ; T\in B, \theta\in C}
  \\&
  =  \E{YI_B(T)I_C(\theta)}
  \\&
  =  \E{f(X)I_B(|X|)I_C(\theta)}
  \\&
  =\int \sum_{x\in\{0,1\}^n}
  f(x) I_B(|x|)I_C(\theta)
  \prod_{i=1}^n\theta^{x_i}(1-\theta)^{1-x_i}p(\theta)
  d\theta
  \\&
  =\int \sum_{x\in\{0,1\}^n}
  f(x) I_B(|x|)I_C(\theta)
  \prod_{i=1}^n\theta^{x_i}(1-\theta)^{1-x_i}p(\theta)
  d\theta
  \\&
  =\int \sum_{x\in\{0,1\}^n}\sum_{t=0}^n \delta_{t,|x|}
  f(x) I_B(|x|)I_C(\theta)
  \prod_{i=1}^n\theta^{x_i}(1-\theta)^{1-x_i}p(\theta)
  d\theta
  \\&
  =\int \sum_{x\in\{0,1\}^n}\sum_{t=0}^n \delta_{t,|x|}
  f(x) I_B(t)I_C(\theta)
  \theta^t(1-\theta)^{n-t}p(\theta)
  d\theta
  \\&
  =\int \sum_{t=0}^n
  \binom{n}{t}^{-1}\sum_{x\in\{0,1\}^n}\delta_{t,|x|} f(x)
  I_B(t)I_C(\theta)
  \binom{n}{t}\theta^t(1-\theta)^{n-t}p(\theta)  d\theta
  \\&
  =\E{ g(T)I_B(t)I_C(\theta)  }
  \\&
  =\E{g(T) ; T\in B, \theta\in C }.
\end{align*}
Here, we denote by $I_B$ and $I_C$
the indicator functions of $B$ and $C$ respectively.
Since $\sigma(\mathcal S,\mathcal N)$ is generated by
$\{T\in B\}\cap\{\theta\in C\}\,(B,C\in\mathcal B(\mathbf R))$,
by \cite[1.6.~Lemma (a)]{williams1991probability},
we have $\E(Y;A)=\E(g(T);A)$
for any $A\in\sigma(\mathcal S,\mathcal N)$.
Consequently, $g(T)$ is a version of $\E(Y\vert\sigma(\mathcal S,\mathcal N))$.
Hence, $\mathcal S$ is sufficient for $(\mathcal D,\mathcal N)$.
\end{example}

To describe a sub $\sigma$-algebra of interest
in our application to the $\mathcal A$-distribution,
we consider orbits of some group action.
Suppose that a group $G$ acts on a measurable space $(S,\Sigma)$.
For $B\subset S$ and $g\in G$, we put
\begin{align*}
  g\cdot B&:=\left\{g\cdot b \,\vert\, \,b\in B \right\}, &
  G\cdot B&:=\left\{g\cdot b \,\vert\, g\in G,\,b\in B \right\}.
\end{align*}
Note that $G\cdot B=B$ holds if and only if $g\cdot B=B$ for any $g\in G$.

Let $\mathcal O^*$ be the family of 
the element in $\Sigma$ invariant under the action of $G$,
i.e., we put
$$
\mathcal O^* :=\left\{  B\in\Sigma : G\cdot B=B \right\}.
$$
\begin{lemma}\label{a18Dec2017}
  $\mathcal O^*$ is a sub $\sigma$-algebra of $\Sigma$. 
\end{lemma}
\begin{proof} \rm
  Obviously, $\mathcal O^*$ includes $\Sigma$.
  Let $B$ be an element in $\mathcal O^*$.
  Take any $g\in G$ and $b'\in B^c$. Suppose that $g\cdot b'\in B$.
  Then, we have $b'=g^{-1}gb'\in G\cdot B=B$. This is a contradiction.
  Hence, $g\cdot b'$ is an element of $B^c$, and we have $G\cdot B^c\subset B^c$.
  Since $G\cdot B^c$ includes $B^c$ obviously, we have $G\cdot B^c= B^c$.
  Consequently, $\mathcal O^*$ includes $B^c$.

  Suppose that $B_n\,(n\in\mathbf N)$ is an element of $\mathcal O^*$.
  Since we have
  $
  G\cdot\bigcup_{n=1}^\infty B_n
  =\bigcup_{n=1}^\infty G\cdot B_n
  =\bigcup_{n=1}^\infty B_n,
  $
  $\bigcup_{n=1}^\infty B_n$ is an element of $\mathcal O^*$.
  \qed
\end{proof}
A measurable map  $X:(\Omega,\mathcal F)\rightarrow(S,\Sigma)$
induces a sub $\sigma$-algebra of $\mathcal F$ by
$$
\mathcal O := \left\{
\{ X\in B\} \vert B\in\mathcal O^*
\right\}.
$$
Note that $\{ X\in B \}$ is the inverse image
$X^{-1}(B)=\{\omega\in\Omega\vert X(\omega)\in B\}$.
This notation is often used in the probability theory
and we use it in the sequel.
We call $\mathcal O$ as the $\sigma$-algebra generated by the orbits of group $G$.
\begin{lemma}\label{a19Dec2017}
  Let $f:S\rightarrow\mathbf R$ be a function.
  Suppose that a measurable map
  $X:(\Omega,\mathcal F)\rightarrow (S,\Sigma)$
  is surjective.
  Then, all of the following four conditions are equivalent:
  \begin{enumerate}
  \item[\rm (a).] $f$ is $\mathcal O^*$-measurable.
  \item[\rm (b).] $f(g\cdot x)=f(x)$ holds for any $g\in G$ and any $x\in S$.
  \item[\rm (c).] $f(X)$ is $\mathcal O$-measurable.
  \item[\rm (d).] $f(g\cdot X)=f(X)$ holds for any $g\in G$.
  \end{enumerate}
\end{lemma}
\begin{proof} \rm
  [(a)$\Rightarrow$(c)]\/.
  Suppose that $f$ is $\mathcal O^*$-measurable.
  For any $B\in\mathcal B(\mathbf R)$, we have $f^{-1}(B)\in\mathcal O^*$.
  By the definition of $\mathcal O$,
  $X^{-1}(f^{-1}(B))=\{f(X)\in B\}\in\mathcal O$ holds.
  Hence, $f(X)$ is $\mathcal O$-measurable.

  [(c)$\Rightarrow$(d)]\/.  
  Suppose that $f(X)$ is $\mathcal O$-measurable.
  Take an arbitrary $a\in\mathbf R$.
  Then, $\{f(X)=a\}\in\mathcal O$ implies
  that there exists $B\in\mathcal O^*$
  such that $\{X\in B\}=\{f(X)=a\}=\{X\in f^{-1}(a)\}$.
  Since $X:\Omega\rightarrow S$ is surjective,
  we have $B=f^{-1}(a)$.
  Thus, $f^{-1}(a)$ is an element of $\mathcal O^*$,
  and we have $G\cdot f^{-1}(a)=f^{-1}(a)$.
  This implies $g\cdot f^{-1}(a)=f^{-1}(a)$ holds for any $g\in G$, 
  and we have
  \begin{align*}
    \{f(g\cdot X)=a\}
    &=\{g\cdot X\in f^{-1}(a)\}
    =\{ X\in g^{-1}\cdot f^{-1}(a)\}
    \\&
    =\{ X\in f^{-1}(a)\}
    =\{f( X)=a\}.
  \end{align*}
  Hence, $f(g\cdot X)=f(X)$ holds for all $g\in G$.

  [(d)$\Leftrightarrow$(b)]\/.  
  Since $X:\Omega\rightarrow S$ is surjective,
  $f(g\cdot x)=f(x)$ holds for any $x\in S$ and $g\in G$
  if and only if 
  $f(g\cdot X)=f(X)$ holds for any $g\in G$.

  [(b)$\Rightarrow$(a)]\/.  
  Suppose the function $f$ is invariant under the action of $G$.
  Take an arbitrary $B\in\mathcal B(\mathbf R)$.
  By the invariance of the function $f$, we have
  \begin{align*}
    G\cdot f^{-1}(B)
    &
    =\left\{g\cdot x \vert g\in G,\, x\in S,\, f(x)\in B\right\}
    =\left\{x\in S \vert f(x)\in B\right\}
    =f^{-1}(B).
  \end{align*}
  Hence, $f^{-1}(B)$ is included in $\mathcal O^*$.
  This implies that $f$ is $\mathcal O^*$-measurable.
  \qed
\end{proof}
\begin{remark}\label{rem-bac}
  In Lemma \ref{a19Dec2017},
  $\mathrm{(b)}\Rightarrow\mathrm{(a)} \Rightarrow\mathrm{(c)}$
  holds even without the assumption that $X$ is surjective.
\end{remark}
Since $\mathcal O$ is a sub $\sigma$-algebra of $\sigma(X)$,
$Y\in\mathcal L^1(\mathcal O)$ can be regarded as a function of $X$.
By Lemma \ref{a19Dec2017},
we say that a random variable $Y$ is invariant under the action of group $G$
if $Y$ is $\mathcal O$-measurable.

We apply the above discussion on group actions to sufficient $\sigma$-algebras.
Let $\ObservedData$ be a sub $\sigma$-algebra of $\mathcal F$.
Let $\Theta$ be a topological space,
and $\theta:(\Omega,\mathcal F)\rightarrow(\Theta,\mathcal B(\Theta))$
be a measurable map.
We regard $\theta$ and $\Theta$
as the parameter and the space of parameters respectively.
Let $S$ be a measurable space,
and $T:\Omega\rightarrow S$ be an $\ObservedData$-measurable map.
For $X\in\mathcal L^1(\ObservedData)$,
$\E{X\vert T,\theta}$ can be regarded
as a function on $S\times\Theta$.
In other words, there exists a function $f_X:S\times\Theta\rightarrow\mathbf R$
such that
$
f_X(T(\omega),\theta(\omega)) = \E{X\vert T,\theta}(\omega)
$
for all $\omega\in\Omega$.
\begin{lemma}\label{a22Dec2017}
  We assume the same notation as above.
  Suppose that an action of group $G$ on $\Theta$ satisfies 
  $$
  f_X(t,g\cdot c) = f_X(t, c)
  $$
  for all $t\in S$, $c\in\Theta$, $g\in G$, and $X\in\mathcal L^1(\ObservedData)$,
  and put
  $$
  \mathcal O
  :=\{
   \{(T,\theta)\in B\} \,\vert\,
   B\in \Sigma\times\mathcal B(\Theta),\,
   G\cdot B=B
  \}.
  $$
  Then, $\mathcal O$ is of interest with respect to
  $(\ObservedData, \sigma(T,\theta))$.
\end{lemma}
\begin{proof}\rm
  The group action on $\Theta$ induces an group action on
  the Cartesian product $S\times\Theta$ by
  $$
  g\cdot(t,c) = (t,g\cdot c)
  \quad (g\in G,\, (t,c)\in S\times\Theta).
  $$
  Applying Lemma \ref{a19Dec2017} and Remark \ref{rem-bac} in the case of
  the group action on $S\times\Theta$,
  $f_X(T,\theta)$ is $\mathcal O$-measurable
  for any $X\in\mathcal L^1(\ObservedData)$.
  Hence, $\mathcal O$ is of interest with respect to
  $(\ObservedData, \sigma(T,\theta))$.
  \qed
\end{proof}

Although the following lemmas may be well known, we could not find a proof in the literature.
Therefore, we present a proof here. We will use these lemmas in the next section.

\begin{lemma}\label{b18Jan2018}
  Let a measurable function $\theta:\Omega\rightarrow\Theta$ be surjective
  and $\mathcal G$ be a sub $\sigma$-algebra of
  $\sigma(\theta):=\{\theta^{-1}B\vert B\in\mathcal B(\Theta)\}$.
  Then, $\theta\mathcal G:=\{\theta(B)\vert B\in\mathcal G\}$
  is a sub $\sigma$-algebra of $\mathcal B(\Theta)$.
\end{lemma}
\begin{proof}\rm
  Since $\theta$ is surjective,
  $\Theta=\theta(\Omega)$ is an element of $\theta\mathcal G$.

  Let $A\in\theta\mathcal G$.
  There exists $B\in\mathcal G$ such that $A=\theta(B)$.
  By $\mathcal G\subset\sigma(\theta)$,
  there exists $C\in\mathcal B(\Theta)$ such that $B=\theta^{-1}C$.
  Since surjectivity of $\theta$ implies
  that $\theta(\theta^{-1}S)=S$ holds for any $S\subset\Theta$,
  we have
  $
  A=\theta(B)=\theta(\theta^{-1}C)=C.
  $
  By $\theta^{-1}A=\theta^{-1}C=B\in\mathcal G$,
  we have $\theta^{-1} A^c=(\theta^{-1}A)^c\in\mathcal G$.
  By surjectivity of $\theta$,
  $A^c=\theta(\theta^{-1}A^c)$ is an element of $\theta\mathcal G$.

  Suppose $A_n\in\theta\mathcal G$ for $n\in\mathbf N$.
  Analogously, we have $\theta^{-1}A_n\in\mathcal G$.
  Consequently,
  $
  \theta^{-1}\bigcup_{n\in\mathbf N}A_n
  =\bigcup_{n\in\mathbf N} \theta^{-1}A_n
  \in\mathcal G
  $
  implies
  $
  \bigcup_{n\in\mathbf N}A_n
  =\theta\(\theta^{-1}\bigcup_{n\in\mathbf N}A_n\)
  \in\theta\mathcal G.
  $
  \qed
\end{proof}

\begin{lemma}\label{d19Dec2017}
  Suppose that
  a measurable function $\theta:\Omega\rightarrow\Theta$ is surjective.
  Let $f_\lambda:\Theta\rightarrow\mathbf R\,(\lambda\in\Lambda)$ be
  measurable functions.
  Then, we have
  \begin{equation}\label{a18Jan2018}
  \sigma\(f_\lambda\circ\theta:\lambda\in\Lambda\)
  =\theta^{-1}\sigma\(f_\lambda:\lambda\in\Lambda\),
  \end{equation}
  where $\sigma\(f_\lambda:\lambda\in\Lambda\)$
  is the $\sigma$-algebra generated by
  $\{f_\lambda^{-1}B \vert \lambda\in\Lambda,\,B\in\mathcal B(\mathbf R)\}.$
\end{lemma}
\begin{proof}\rm
  Obviously, the right hand side of \eqref{a18Jan2018} includes
  the left hand side.
  We show the opposite inclusion.
  By the surjectivity of $\theta$, we have
  $
  f_\lambda^{-1}B
  =\theta\theta^{-1}f_\lambda^{-1}B
  =\theta(f_\lambda\circ\theta)^{-1}B
  \in\theta\sigma(f_\lambda\circ\theta:\lambda\in\Lambda)
  $
  for any $B\in\mathcal B(\mathbf R)$.
  By Lemma \ref{b18Jan2018},
  $\theta\sigma(f_\lambda\circ\theta:\lambda\in\Lambda)$
  is a sub $\sigma$-algebra of $\mathcal B(\Theta)$.
  Hence, we have
  \begin{equation}\label{c18Jan2018}
  \sigma(f_\lambda:\lambda\in\Lambda)
  \subset 
  \theta\sigma(f_\lambda\circ\theta:\lambda\in\Lambda).
  \end{equation}

  Note that we have 
  \begin{equation}\label{d18Jan2018}
  C=\theta^{-1}\theta C
  \quad (C\in\sigma(\theta)).
  \end{equation}
  In fact, since there exists $C'\in\mathcal B(\Theta)$ such that $C=\theta^{-1}C'$,
  we have
  $
  \theta^{-1}\theta C
  =\theta^{-1}\theta\theta^{-1}C'
  =\theta^{-1}C'
  =C.
  $

  Let $A\in\theta^{-1}\sigma(f_\lambda:\lambda\in\Lambda)$.
  There exists $B\in\sigma(f_\lambda:\lambda\in\Lambda)$
  such that $A=\theta^{-1}B$.
  By \eqref{c18Jan2018},
  there exists $C\in\sigma(f_\lambda\circ\theta:\lambda\in\Lambda)$
  such that $B=\theta^{-1}C$.
  Equation \eqref{d18Jan2018} and
  $
  \sigma(f_\lambda\circ\theta:\lambda\in\Lambda)
  \subset
  \sigma(\theta)
  $
  implies
  $A=\theta^{-1}\theta C=C\in\sigma(f_\lambda\circ\theta:\lambda\in\Lambda).$
  Consequently,
  the opposite inclusion holds.
  \qed
\end{proof}

\begin{lemma}\label{b9Jan2018}
  Let $V$ and $W$ be finite-dimensional vector spaces over $\mathbf R$,
  $V\oplus W$ be the direct sum of $V$ and $W$,
  $\pi:V\oplus W\rightarrow V$ be the projection,
  and $V^*$ be the dual space of $V$.
  Then, we have 
  \begin{equation}\label{a9Jan2018}
  \left\{
  B\in\mathcal B(V\oplus W)\vert
  B+W = B
  \right\}
  =
  \sigma(f\circ\pi : f\in V^*).
  \end{equation}
  Here, we put
  $
  B+W:=\{v+w \vert v\in B,\,w\in W\}.
  $  
\end{lemma}
\begin{proof}\rm
  Since $\pi^{-1}B+W=\pi^{-1}B$ holds for any $B\in\mathcal B(V)$,
  $\left\{ B\in\mathcal B(V\oplus W) \vert B+W = B\right\}$
  includes $\pi^{-1}\mathcal B(V)$.
  Let $\iota:V\rightarrow V\oplus W$ be the canonical injection.
  Suppose that $B\in\mathcal B(V\oplus W)$ satisfies
  $B+W = B$.
  Since we have
  \begin{align*}
    x\in \pi^{-1}\iota^{-1}B
    &\Leftrightarrow \iota\pi(x)\in B\\
    &\Leftrightarrow \pi(x)\in B
    &&\text{($\iota(y)=y$ holds for $y\in V$)}\\
    &\Rightarrow \pi(x)+(x-\pi(x))\in B+W
    &&\text{($x-\pi(x)\in W$)}\\
    &\Rightarrow x\in B+W\\
    &\Leftrightarrow x\in B
    &&\text{($B+W=B$)},
  \end{align*}
  $\pi^{-1}\iota^{-1}B\subset B$ holds.
  Since we can show the opposite inclusion analogously,
  we have $\pi^{-1}\iota^{-1}B=B$.
  By $\iota^{-1}B\in\mathcal B(V)$, $B$ is an element of $\pi^{-1}\mathcal B(V)$.
  Then, we have
  $$
  \left\{
  B\in\mathcal B(V\oplus W) \vert B+W = B
  \right\}
  =
  \pi^{-1}\mathcal B(V).
  $$
  
  Since $f\in V^*$ is a continuous map from $V$ to $\mathbf R$,
  $\mathcal B(V)$ includes $\sigma(f:f\in V^*)$.
  Let $\{f_1,\dots, f_n\}$ be a basis of $V^*$.
  Since any open subsets of $V\cong\mathbf R^n$ is a countable union
  of open sets of the form
  $$
  \bigcap_{i=1}^n f_i^{-1}\(\{x\in\mathbf R\vert a_i<x<b_i\}\)
  \quad (a_i, b_i\in\mathbf Q),
  $$
  we have
  $
  \mathcal B(V)
  \subset\sigma(f_1,\dots, f_n)
  \subset\sigma(f:f\in V^*).
  $
  Consequently, $\mathcal B(V)=\sigma(f:f\in V^*)$ holds
  and we have
  $$
  \left\{
  B\in\mathcal B(V\oplus W) \vert B+W = B
  \right\}
  =
  \pi^{-1}\sigma(f:f\in V^*).
  $$
  By Lemma \ref{d19Dec2017}, the right hand side of the above equation
  equals to $\sigma(f\circ\pi:f\in V^*)$.
  \qed
\end{proof}

\section{Application to the Conditional MLE problem} \label{sec:application}
In this section, we discuss a conditional MLE problem
for $\mathcal A$-distributions.

Let $A$ be an integer matrix of size $d\times n$,
and $b$ be an integer vector of length $n$.
Suppose that Poisson random variables
$X_k\sim\mathrm{Pois}(c_k),\,(k=1,\dots, n)$
are mutually independent.
We denote the conditional distribution of the random vector $X:=(X_1,\dots,X_n)^\top$
given $AX=b$ as an $\mathcal A$-distribution.
The parameters of $\mathcal A$-distribution are
$c=(c_1,\dots,c_n)^\top$ and $b=(b_1,\dots,b_n)^\top$.
The probability mass function of the $\mathcal A$-distribution
is given as
\begin{align*}
  \Prob\left(X=x\vert AX=b, \theta=c\right)
  & = \frac{\prod_{j=1}^n \frac{c_j^{x_j}}{x_j!} \exp\(-c_j\)}%
           {\sum_{Ay=b}\prod_{j=1}^n \frac{ c_j^{y_j}}{y_j!}\exp\(-c_j\)}
    = \frac{\prod_{j=1}^n \frac{c_j^{x_j}}{x_j!}}%
           {\sum_{Ay=b}\prod_{j=1}^n \frac{ c_j^{y_j}}{y_j!}}.
\end{align*}

An application of conditional distributions in statistics
is the elimination of nuisance parameters.
By Definition \ref{a14Nov2017} and Remark \ref{a22Nov2017},
the conditional distribution of a statistic
given the occurrence of a sufficient statistic of a nuisance parameter
does not depend on the value of the nuisance parameter.
This is an important property in similar tests and the Neyman--Scott problems
(see, e.g., \cite{amari2016information} and \cite{dojo}).
Hence, by the conditional distribution,
we can estimate the parameter of interest
without being affected by the nuisance parameter.
From this perspective,
we can regard the $\mathcal A$-distribution
as the conditional distribution given the sufficient statistic $AX$,
and the nuisance parameter corresponding to $AX$ is $A\theta$.
The traditional definition does not offer a mathematically clear
description of the parameter of interest for this case.
This is the motivation for the discussions in the previous section.
The space of parameters of interest is naturally described as
a sub $\sigma$-algebra under less restrictive conditions on $\theta$ and $c$.

The parameter $c$ of $\mathcal A$-distribution moves on
the set $\Theta:=\mathbf R_{\geq 0}^n$.
Consider the action of the multiplicative group $G:=\mathbf R_{>0}^d$
on the space $\Theta$ defined as
$$
g\cdot c
=\(c_j\prod_{i=1}^d g_i^{a_{ij}}\)_{j=1,\dots,n}
\quad (g\in G,\,c\in\Theta).
$$
This group action on $\Theta$ induces group action on
$\mathbf Z_{\geq 0}^d\times\Theta$ by
$$
g\cdot(b,c) = (b,g\cdot c)
\quad (g\in G,\, (b,c)\in \mathbf Z_{\geq 0}^d\times\Theta).
$$
For a vector $v=(v_1,\dots, v_n)^\top\in\mathbf R^n$, we use
$J(v)$ to denote the set of subscript $j$ that satisfies $v_j\neq 0$.
We also use $|J(v)|$ to denote the number of elements in $J(v)$.
Applying Lemma \ref{a22Dec2017}
in the case where $\ObservedData=\sigma(X)$,
$S=\mathbf Z_{\geq 0}^d$, and $T=AX$,
we have the following theorem:
\begin{thm}\label{b25Dec2017}
  The sub $\sigma$-algebra
  $$
  \mathcal O
  :=\{
   \{(AX,\theta)\in B\} \,\vert\,
   B\in \mathcal B(\mathbf Z_{\geq 0}^d)\times\mathcal B(\Theta),\,
   G\cdot B=B
  \}
  $$
  is of interest with respect to $(\sigma(X),\sigma(AX,\theta))$. 
\end{thm}
\begin{proof}\rm
  For any $g\in G$, we have
  \begin{align*}
    \Prob\left(X=x\vert AX=b, \theta=g\cdot c\right)
    &=
    g\cdot
    \frac{\prod_{j\in J(c)} c_j^{x_j}/x_j!}
         {\sum_{Ay=b}\prod_{j\in J(c)} c_j^{y_j}/y_j!}\\
    &=\frac{\prod_{j\in J(c)} \(c_j^{x_j}\prod_{i=1}^dg_i^{a_{ij}x_j}\)/x_j!}
         {\sum_{Ay=b}\prod_{j\in J(c)} \(c_j^{y_j}\prod_{i=1}^dg_i^{a_{ij}y_j}\)/y_j!}\\
    &=\frac{\prod_{i=1}^dg_i^{b_i}\prod_{j\in J(c)} c_j^{x_j}/x_j!}
         {\sum_{Ay=b}\prod_{i=1}^dg_i^{b_i}\prod_{j\in J(c)}c_j^{y_j}/y_j!}\\
    &=\frac{\prod_{j\in J(c)}c_j^{x_j}/x_j!}
         {\sum_{Ay=b}\prod_{j\in J(c)}c_j^{y_j}/y_j!}\\
    &= \Prob\left(X=x\vert AX=b, \theta=c\right).
  \end{align*}
  Since the conditional distribution of $X$ with respect to $(AX,\theta)$
  is invariant under the action of $G$ on $\mathbf Z_{\geq 0}^d\times\Theta$,
  for any $Y\in\mathcal L^1(\sigma(X))$,
  the conditional expectation
  $\E{Y\vert AX,\theta}$ is also invariant under the action.
  By Lemma \ref{a22Dec2017},
  $\mathcal O$ is of interest with respect to $(\sigma(X),\sigma(AX,\theta))$. 
  \qed
\end{proof}
Note that the quotient space $\Theta/G$ by the group action $G$ is not a manifold.
Therein lies the difficulty with describing the space of parameters of interest
and hence why we utilized the notion of $\sigma$-algebra of interest.

For $\alpha=(\alpha_1,\dots,\alpha_n)^\top\in\mathbf R^n$,
let $R_\alpha$ be the function from $\Theta=\mathbf R_{\geq 0}^n$
to $\mathbf R$ defined by
$$
R_\alpha(c):=
\begin{cases}
  \prod_{j\in J(\alpha)} c_j^{\alpha_j}
  &(\text{$c_j\neq 0$ for all $j\in J(\alpha)$})\\
  0
  &(\text{$c_j= 0$ for some $j\in J(\alpha)$})
\end{cases}
\quad (c=(c_1,\dots,c_n)^\top\in\Theta).
$$
Let $Z:\Theta\rightarrow\mathbf R^n$ be the function
defined by $Z(c):=(Z_1(c),\dots,Z_n(c))^\top\,(c\in\Theta)$ 
where
$$
Z_j(c):=
\begin{cases}
  1 & (c_j>0)\\
  0 & (c_j=0).
\end{cases}
$$
\begin{lemma}\label{f7Jan2018}
The random variables $AX$, $R_\alpha(\theta)\,(\alpha\in\ker A)$, and $Z(\theta)$
are $\mathcal O$-measurable.
\end{lemma}
\begin{proof}\rm
  Obviously, $AX$ is $\mathcal O$-measurable.
  Let $\pi:\mathbf Z_{\geq 0}^d\times\Theta\rightarrow\Theta$ be the projection.
  By some calculations, 
  we have
  \begin{align*}
    R_\alpha\circ\pi(g\cdot (t,c)) &= R_\alpha\circ\pi((t,c)),\, &
    Z\circ\pi(g\cdot (t,c)) &= Z\circ\pi((t,c))
  \end{align*}
  for any $\alpha\in\ker A$, $g\in G$,
  and $(t,c)\in\mathbf Z_{\geq 0}^d\times\Theta$.
  Consequently, 
  the functions $R_\alpha\circ\pi$ and $Z\circ\pi$
  are invariant under $G$.
  Applying Lemma \ref{a19Dec2017} in the case where $X=(AX,\theta)$,
  $R_\alpha\circ\pi(X)=R_\alpha(\theta)$ and
  $Z\circ\pi(X)=Z(\theta)$ are $\mathcal O$-measurable.
  \qed
\end{proof}

Let $\{e_1,\dots,e_n\}$ be the standard basis of $\mathbf R^n$,
i.e., the $i$-th component of $e_i$ is $1$ and the other components are $0$.
For the $d\times n$ matrix $A$, $\ker A$ and $\im A^\top$ can be written as
\begin{align*}
  \ker A&=
  \left\{\sum_{j=1}^nx_je_j\vert\sum_{j=1}^n a_{ij}x_j=0\right\},&
  \im A^\top&=
  \sum_{i=1}^d\mathbf R\sum_{j=1}^n a_{ij}e_j,
\end{align*}
where $a_{ij}$ is the $(i,j)$-component of $A$.
For $z\in\{0,1\}^n$,
let $\mathbf R^{J(z)}:=\sum_{j\in J(z)}\mathbf R e_j$
be the sub vector space of $\mathbf R^n$ spanned by $e_j\,(j\in J(z))$,
$
p_z:
\mathbf R^n\rightarrow\mathbf R^{J(z)}\,
\(\sum_{j=1}^nx_je_j\mapsto \sum_{j\in J(z)}x_je_j\)
$
be the projection,
and 
$\hat\iota_z:\mathbf R^{J(z)}\rightarrow\mathbf R^n$
be the canonical injection.
For $\alpha\in\mathbf R^n$,
we denote by $L_\alpha$ the linear map from $\mathbf R^n$ to $\mathbf R$
defined by
$$
L_\alpha(x)
= \sum_{j=1}^n \alpha_jx_j
\quad \(x=(x_1,\dots, x_n)^\top\in\mathbf R^n\).
$$
The compositions of $L_\alpha$ and $\hat\iota_z$ generate
a sub $\sigma$-algebra of $\mathcal B(\mathbf R^{J(z)})$,
and we put it as 
$
\sigma(L_\alpha\hat\iota_z:\alpha\in\mathbf R^{J(z)}\cap\ker A)
= \{(L_\alpha\hat\iota_z)^{-1}(B) \vert B\in\mathcal B(\mathbf R) \}.
$
\begin{lemma}\label{b6Jan2018}
  Under the same notation as above,
  the following equation holds for any $z\in\{0,1\}^n$:
  \begin{equation}\label{f18Jan2018}
  \left\{B\in\mathcal B(\mathbf R^{J(z)}) \vert
  B + p_z\im A^\top = B\right\}
  =
  \sigma(L_\alpha\hat\iota_z:\alpha\in\mathbf R^{J(z)}\cap\ker A).
  \end{equation}
\end{lemma}
\begin{proof}\rm
  With a map
  $$
  \langle \cdot,\cdot\rangle:
  \mathbf R^{J(z)}\times\mathbf R^{J(z)}\rightarrow\mathbf R
  \quad
  \(
  \left\langle \sum_{j\in J(z)}x_je_j,\, \sum_{j\in J(z)}y_je_j,\right\rangle
  =\sum_{j\in J(z)} x_jy_j
  \),
  $$
  $\mathbf R^{J(z)}$ is an inner product space.
  By the equation
  $$
  \mathbf R^{J(z)}\cap\ker A
  =
  \{
  v\in\mathbf R^{J(z)} \vert
  \text{$\langle v,w\rangle=0$ for all $w\in p_z\im A^\top$}
  \},
  $$
  we have
  $$
  \mathbf R^{J(z)}=
  \(\mathbf R^{J(z)}\cap\ker A\)
  \oplus
  p_z\im A^\top.
  $$
  By Lemma \ref{b9Jan2018},
  we have
  $$
  \left\{B\in\mathcal B(\mathbf R^{J(z)}) \vert
  B + p_z\im A^\top = B\right\}
  =
  \sigma\(f\circ\pi: f\in\(\mathbf R^{J(z)}\cap\ker A\)^*\),
  $$
  where $\pi:\mathbf R^{J(z)}\rightarrow\mathbf R^{J(z)}\cap\ker A$
  is the projection and
  $\(\mathbf R^{J(z)}\cap\ker A\)^*$ denotes the dual space
  of $\mathbf R^{J(z)}\cap\ker A$.
  Since we have 
  $$
  \{ f\circ\pi: f\in \(\mathbf R^{J(z)}\cap\ker A\)^* \}
  =
  \{L_\alpha\hat\iota_z:\alpha\in\mathbf R^{J(z)}\cap\ker A)\},
  $$
  Equation \eqref{f18Jan2018} holds.
    \qed
\end{proof}

\begin{lemma}\label{g6Jan2018}
  For $z\in\{0,1\}^n$,
  Let $\iota_z$ be the canonical injection
  from
  $\Theta_z:=\{c\in\Theta\vert Z(c)=z\}$
  to
  $\Theta$.
  Then, the inclusion
  \begin{equation}\label{e7Jan2017}
    \iota_z^{-1}\mathcal O^{**}\subset \iota_z^{-1}\sigma(R_\alpha:\alpha\in\ker A)
  \end{equation}
  holds.
  Here, we put $\mathcal O^{**}:=\{B\in\mathcal B(\Theta)\vert G\cdot B=B\}$.
\end{lemma}
\begin{proof} \rm
  Fix $z=(z_1,\dots,z_n)^\top\in\{0,1\}^n$.
  
  Let $B\in\mathcal O^{**}$.
  Since $\iota_z$ is a continuous map,
  $\iota_z^{-1}B$ is a Borel set of $\Theta_z$.
  For any $c\in\iota_z^{-1}B$ and $g\in G$,
  $\iota_z^{-1}B\subset\Theta_z$ and $G\cdot\Theta_z=\Theta_z$ implies
  $g\cdot c\in\Theta_z$.
  Since the equation 
  $
  \iota_z(g\cdot c)  = g\cdot c  \in G\cdot B = B
  $
  implies $g\cdot c\in\iota_z^{-1}B$,
  we have $G\cdot \iota_z^{-1}B=\iota_z^{-1}B$.
  Hence, we have the following inclusion relation:
  \begin{equation}\label{a05Jan2018}
  \iota_z^{-1}\mathcal O^{**}
  \subset
  \left\{B\in\mathcal B(\Theta_z) \vert G\cdot B = B\right\}.
  \end{equation}

  Suppose that $B\in\mathcal B(\Theta_z)$ satisfies $G\cdot B=B$.
  Let $\psi:\Theta_z\rightarrow\mathbf R^{J(z)}$ be a diffeomorphism
  defined by
  $
  \psi(c):=\sum_{j\in J(z)}\log(c_j)e_j\,
  (c=(c_1,\dots,c_n)^\top\in\Theta_z).
  $
  Note that $c_j$ is strictly positive for $j\in J(z)$.
  Since $\psi$ is a diffeomorphism, $\psi(B)$ is a Borel set of $\mathbf R^{J(z)}$.
  Put $v_i:=\sum_{j\in J(z)}a_{ij}e_j\in\mathbf R^{J(z)}\,(i=1,\dots, d)$.
  For any $c\in B$ and any $g_i\in\mathbf R\,(i=1,\dots,d)$,
  $\psi(c)+\sum_{i=1}^dg_iv_i$ is an element of $\psi(B)$.
  In fact, the inverse 
  \begin{align*}
    \psi^{-1}\(\psi(c)+\sum_{i=1}^dg_iv_i\)
    &=\psi^{-1}\(\sum_{j\in J(z)}\(\log(c_j)+\sum_{i=1}^da_{ij}g_i\)e_j \)
     =\sum_{j\in J(z)}\exp\(\log(c_j)+\sum_{i=1}^da_{ij}g_i\)e_j\\
    &=\sum_{j\in J(z)}\(c_j\prod_{i=1}^d \exp(g_ia_{ij})\)e_j
     = g'\cdot c
  \end{align*}
  is an element of $G\cdot B = B$.
  Here, we put $g':=\(\exp(g_i)\)_{i=1,\dots, d}\in G$.
  This implies $\psi(B)+p_z\im A^\top=\psi(B)$.
  Since $\psi^{-1}\psi(B)=B$ holds, we have 
  \begin{equation}\label{a6Jan2018}
    \left\{B\in\mathcal B(\Theta_z) \vert G\cdot B = B\right\}
    \subset
    \psi^{-1}
    \left\{B\in\mathcal B(\mathbf R^{J(z)}) \vert
    B + p_z\im A^\top = B\right\}.
  \end{equation}

  By Lemma \ref{b6Jan2018}, we have 
  \begin{equation}\label{a7Jan2018}
    \psi^{-1}
    \left\{B\in\mathcal B(\mathbf R^{J(z)}) \vert
    B + p_z\im A^\top = B\right\}
    =
    \psi^{-1}
    \sigma(L_\alpha\hat\iota_z:\alpha\in\mathbf R^{J(z)}\cap\ker A)
  \end{equation}

  Since the standard exponential mapping
  $\exp:\mathbf R\rightarrow\mathbf R_{>0}\,(x\mapsto\exp(x))$
  is a diffeomorphism, we have 
  $$
  \sigma(L_\alpha\hat\iota_z:\alpha\in\mathbf R^{J(z)}\cap\ker A)
  =
  \sigma(\exp L_\alpha\hat\iota_z:\alpha\in\mathbf R^{J(z)}\cap\ker A).
  $$

  The mappings $R_\alpha\iota_z\,(\alpha\in\mathbf R^{J(z)}\cap\ker A)$
  induce a $\sigma$-algebra on $\Theta_z$ as
  $$
  \sigma(R_\alpha\iota_z:\alpha\in\mathbf R^{J(z)}\cap\ker A)
  :=
  \sigma\(
  (R_\alpha\iota_z)^{-1}(B) :
  \alpha\in\ker A\cap \mathbf R^{J(z)},\,  B\in\mathcal B(\mathbf R)
  \).
  $$

  By Lemma \ref{d19Dec2017} and
  the equation $\exp L_\alpha\hat\iota_z\psi=R_\alpha\iota_z$,
  we have
  \begin{equation}\label{b7Jan2018}
  \psi^{-1}
  \sigma(\exp L_\alpha\hat\iota_z:\alpha\in\mathbf R^{J(z)}\cap\ker A)
  =
  \sigma(R_\alpha\iota_z:\alpha\in\mathbf R^{J(z)}\cap\ker A)
  \end{equation}

  Obviously, we have
  \begin{equation}\label{c7Jan2018}
    \sigma(R_\alpha\iota_z:\alpha\in\mathbf R^{J(z)}\cap\ker A)
    \subset 
    \sigma(R_\alpha\iota_z:\alpha\in\ker A).
  \end{equation}

  For any $B\in\mathcal B(\mathbf R)$ and any $\alpha\in\ker A$,
  $(R_\alpha\iota_z)^{-1}(B)=\iota_z^{-1}R_\alpha^{-1}(B)$
  is an element of $\iota_z^{-1}\sigma(R_\alpha:\alpha\in\ker A)$.
  Since $\iota_z^{-1}\sigma(R_\alpha:\alpha\in\ker A)$ is
  a $\sigma$-algebra on $\Theta_z$,
  we have
  \begin{equation}\label{d7Jan2017}
  \sigma(R_\alpha\iota_z:\alpha\in\ker A)
  \subset
  \iota_z^{-1}\sigma(R_\alpha:\alpha\in\ker A)
  \end{equation}

  By \eqref{a05Jan2018}, \eqref{a6Jan2018}, \eqref{a7Jan2018}, \eqref{b7Jan2018},
  \eqref{c7Jan2018} and \eqref{d7Jan2017},
  we have \eqref{e7Jan2017}.
  \qed
\end{proof}

\begin{lemma}\label{b19Jan2018}
  The following equation holds:
  \begin{equation}\label{e18Jan2018}
  \mathcal O^{**}=\sigma\( R_\alpha,\, Z ; \alpha\in\ker A \).
  \end{equation}
\end{lemma}
\begin{proof}\rm
  Since $R_\alpha$ and $Z$ are invariant under the action $G$,
  Lemma \ref{a19Dec2017} implies 
  $\mathcal O^{**}\supset\sigma\( R_\alpha,\, Z ; \alpha\in\ker A \)$.
  Let $B\in\mathcal O^{**}$.
  For $z\in\{0,1\}^n$, put $B_z:=B\cap\Theta_z\in\mathcal O^{**}$.
  Then, we have $B=\bigcup_{z\in\{0,1\}^n}B_z$.
  Since Lemma \ref{g6Jan2018} implies
  $B_z=\iota_z^{-1}B\in\iota_z^{-1}\sigma\( R_\alpha,\, Z ; \alpha\in\ker A \)$,
  there exists $\hat B_z\in\sigma\( R_\alpha,\, Z ; \alpha\in\ker A \)$
  such that $B_z=\iota_z^{-1}\hat B_z$.
  By $B_z=\iota_z^{-1}\hat B_z=\hat B_z\cap\Theta_z$,
  we have $B_z\in\sigma\( R_\alpha,\, Z ; \alpha\in\ker A \)$.
  Consequently, $\sigma\( R_\alpha,\, Z ; \alpha\in\ker A \)$ includes $B$.
  Hence, we have \eqref{e18Jan2018}.
  \qed
\end{proof}

\begin{lemma}\label{c19Jan2018}
  Let $\pi:\mathbf Z_{\geq 0}^d\times\Theta\rightarrow\Theta$ and
  $\pi':\mathbf Z_{\geq 0}^d\times\Theta\rightarrow\mathbf Z_{\geq 0}^d$
  be the projections.
  Put
  $
  \mathcal O^*
  :=
  \{
  B\in\mathcal B(\mathbf Z_{\geq 0}^d\times\Theta)
  \vert G\cdot B=B
  \}.
  $
  Then, the following equation holds:
  \begin{equation}\label{a19Jan2018}
  \mathcal O^*=\sigma\( \pi',\,R_\alpha\circ\pi,\, Z\circ\pi ; \alpha\in\ker A \).
  \end{equation}
\end{lemma}
\begin{proof}\rm
  Since $\pi'$, $R_\alpha\circ\pi$, and $Z\circ\pi$
  are invariant under the action $G$,
  Lemma \ref{a19Dec2017} implies 
  $$
  \mathcal O^*
  \supset
  \sigma\( \pi',\,R_\alpha\circ\pi,\, Z\circ\pi ; \alpha\in\ker A \).
  $$
  Let $B\in\mathcal O^*$.
  For $t\in\mathbf Z_{\geq 0}^d$,
  let $\iota_t:\Theta\rightarrow\mathbf Z_{\geq 0}^d\times\Theta$
  is an inclusion map defined by $\iota_t(c)=(t,c)$,
  and put $B_t:=B\cap\iota_t(\Theta)$.
  Then, $\iota_t^{-1}B_t$ is a Borel set of $\Theta$.
  Since the equation $G\cdot B=B$ implies $G\iota_t^{-1}B=\iota^{-1}B$,
  $\iota_t^{-1}B$ is an element of $\mathcal O^{**}$.
  By Lemma \ref{b19Jan2018} and Lemma \ref{d19Dec2017},
  we have
  $
  \pi^{-1}\iota_t^{-1}B_t
  \in\pi^{-1}\sigma\( R_\alpha,\, Z ; \alpha\in\ker A \)
  =\sigma\( R_\alpha\circ\pi,\, Z\circ\pi ; \alpha\in\ker A \).
  $
  Hence,
  $
  B_t
  =\pi^{-1}\iota_t^{-1}B_t\cap\iota_t(\Theta)
  \in\sigma\( \pi',\,R_\alpha\circ\pi,\, Z\circ\pi ; \alpha\in\ker A \)
  $
  implies
  $
  B=\bigcup_{t\in\mathbf Z_{\geq 0}^d}B_t
  \in\sigma\( \pi',\,R_\alpha\circ\pi,\, Z\circ\pi ; \alpha\in\ker A \).
  $
  We have \eqref{a19Jan2018}.
  \qed
\end{proof}

\begin{thm} \label{th:27Dec2017}
  Let $\hat\theta:\Omega\rightarrow\mathbf Z_{\geq 0}^d\times\Theta$
  be the measurable function defined by
  $\hat\theta(\omega)=(AX(\omega),\theta(\omega))$.
  If $\hat\theta$ is surjective,
  then the equation
  \begin{equation}\label{a27Dec2017}
  \mathcal O = \sigma\( AX, R_\alpha(\theta),\, Z(\theta) ; \alpha\in\ker A \)
  \end{equation}
  holds.
\end{thm}
\begin{proof} \rm
  By Lemma \ref{d19Dec2017} and Lemma \ref{c19Jan2018},
  we have
  \begin{align*}
  \mathcal O
  &=\hat\theta^{-1}\mathcal O^*
  =\hat\theta^{-1}\sigma\(\pi', R_\alpha\circ\pi,\, Z\circ\pi ; \alpha\in\ker A \)
  \\&
  =\sigma\(\pi'(\hat\theta), R_\alpha\circ\pi(\hat\theta),\, Z\circ\pi(\hat\theta)
  ; \alpha\in\ker A \)
  =\sigma\(AX,\, R_\alpha(\theta),\, Z(\theta) ; \alpha\in\ker A \).
  \end{align*}
  \qed
\end{proof}
This theorem implies that sub $\sigma$-algebra of interest $\mathcal O$ stands for
generalized odds ratios, which are, intuitively, parameters of interest.
Note that the parameter may lie on the border $\theta_i$.

As an interesting and important case of $\mathcal A$-distributions,
we consider the $r_1 \times r_2$ contingency table.
Let $u_{ij}$ be independent Poisson random variables
with parameter $\theta_{ij}\geq 0 \,(1\leq i\leq r_1,\,1\leq j\leq r_2)$.
The parameter $\theta:=(\theta_{ij})$ lies on
the set $\Theta:=\mathbf R_{\geq 0}^{r_1\times r_2}$.
As in the previous section, we regard $\theta$ as a measurable function
from $(\Omega,\mathcal F)$
to $(\Theta,\mathcal B(\Theta))$.
Note that we can assume that $\theta$ is surjective
without loss of generality.
Let $\ObservedData$ be the sub $\sigma$-algebra generated by all $u_{ij}$,
and $\mathcal G$ be the sub $\sigma$-algebra generated by
\begin{align*}
  \theta_{ij} &\,(1\leq i\leq r_1,\,1\leq j\leq r_2), &
  \sum_{i=1}^{r_1} u_{ij} &\,(1\leq j\leq r_2), &
  \sum_{j=1}^{r_2} u_{ij} &\,(1\leq i\leq r_1).
\end{align*}
For all $X\in\mathcal L^1(\ObservedData)$,
the conditional expectation $\E{X\vert\mathcal G}$ is invariant
under the action of the multiplicative group $G:=\mathbf R_{>0}^{r_1+r_2}$
on $\Theta$
defined by
$$
g\cdot c
:=
\(g_ig_{r_1+j}c_{ij}\)
\quad \(
 g=(g_i)\in G,\,
 c=(c_{ij})\in\Theta
\).
$$
For $1\leq i, k\leq r_1$ and $1\leq j, \ell\leq r_2$, 
let $R_{ijk\ell}:\Theta\rightarrow\mathbf R$ be a function
defined by 
$$
R_{ijk\ell}(c):=
\begin{cases}
  \frac{c_{ij}c_{k\ell}}{c_{i\ell}c_{kj}}
  & (c_{ij}c_{k\ell}c_{i\ell}c_{kj}\neq 0)\\
  0
  & (c_{ij}c_{k\ell}c_{i\ell}c_{kj}= 0)\\
\end{cases}
\quad (c=(c_{ij}) \in\Theta).
$$
Note that $R_{ijk\ell}$ is a function obtained from the odds ratio.
For $1\leq i \leq r_1$ and $1\leq j\leq r_2$,
we define a function $Z_{ij}:\Theta\rightarrow\mathbf R$ 
by
$$
Z_{ij}(c):=
\begin{cases}
  1 & (c_{ij}>0)\\
  0 & (c_{ij}=0)
\end{cases}
\quad (c=(c_{ij}) \in\Theta).
$$
The functions $Z_{ij}\,(1\leq i\leq r_1,\,1\leq j\leq r_2)$
hold information on the position of zero cells.
The functions $R_{ijk\ell}$ and $Z_{ij}$ are invariant
with respect to the action of group $G$.
By Lemma \ref{a19Dec2017}, random variables $R_{ijk\ell}(\theta)$ and $Z_{ij}(\theta)$ are $\mathcal O$-measurable.

The following theorem states that $A\theta$ is a nuisance parameter.
\begin{thm}\label{th:29May2018}
The following equation holds:
$$
\sigma(AX, \theta) = \sigma(A\theta, \mathcal O).
$$
\end{thm}

\begin{cor}
  $\sigma(A\theta)$ is nuisance for $(\sigma(X),\mathcal O)$.
\end{cor}
\begin{proof} \rm
By Theorem \ref{b25Dec2017},
for any $Y\in\mathcal L^1(\sigma(X))$,
$\E{Y\vert \sigma\(AX,\theta\)}$ is $\mathcal O$-measurable.
The equation in Therorem \ref{th:29May2018} implies
$
\E{Y\vert \sigma\(AX,\theta\)}
=
\E{Y\vert \sigma\(A\theta,\mathcal O\)}.
$
Hence, $\mathcal O$ is of interest with respect to
$(\sigma(X),\sigma\(A\theta,\mathcal O\))$.
Therefore $\sigma(A\theta)$ is nuisance for $(\sigma(X),\mathcal O)$.
\qed
\end{proof}

To show Theorem \ref{th:29May2018}, we prepare the following lemma:
\begin{lemma}\label{e24May2018}
Let $F:\mathbf R^n\rightarrow\mathbf R^d$ be a linear map
and $\iota:\mathbf R_{>0}^n\rightarrow\mathbf R^n$ be the inclusion.
For $\alpha\in\ker F$, let $R_\alpha:\mathbf R_{>0}^n\rightarrow\mathbf R $
be a function defined by $R_\alpha(x):= \prod_{i=1}^n x_i^{\alpha_i}$.
Then, we have
$$
\mathcal B(\mathbf R_{>0}^n)
=
\sigma(F\iota, R_\alpha ; \alpha\in\ker F).
$$
\end{lemma}
\begin{proof} \rm
It is ovbious that the left-hand side includes
the right-hand side.
We show the opposite inclusion.
Let $\{\alpha_1,\dots, \alpha_m\}$ be a basis of $\ker F$.
Then the differential map 
$$
\varphi:\mathbf R_{>0}^n\rightarrow (\mathrm{Im}F\iota)\times\mathbf R_{>0}^m
\quad
(x\mapsto (F\iota(x),\, R_{\alpha_1}(x),\dots,R_{\alpha_m}(x) )  )
$$
is surjective.
By the general theory of the exponential family
(\cite[p.~125]{TKT}, \cite[Theorem 3.~6]{LawrenceDBrown1986}),
$\varphi$ is also injective.
Hence, $\varphi$ is a diffeomorphism
between $\mathbf R_{>0}^n$ and $(\mathrm{Im}F\iota)\times\mathbf R_{>0}^m$,
and we have
\begin{align*}
\mathcal B(\mathbf R_{>0}^n)
&
= \varphi^{-1}\mathcal B\( (\mathrm{Im}F\iota)\times\mathbf R_{>0}^m\)
= \varphi^{-1} \sigma\(p, q_1,\dots, q_m\)
= \sigma\(p\varphi, q_1\varphi,\dots, q_m\varphi\)
\\&
= \sigma\(F\iota,\, R_{\alpha_1}(x),\dots,R_{\alpha_m}\)
\subset \sigma(F\iota, R_\alpha ; \alpha\in\ker F)
\end{align*}
Here,
$p:(\mathrm{Im}F\iota)\times\mathbf R_{>0}^m\rightarrow (\mathrm{Im}F\iota)$
and
$q_i:(\mathrm{Im}F\iota)\times\mathbf R_{>0}^m\rightarrow \mathbf R_{>0}$
are the projections.
\qed
\end{proof}

\begin{proof}{\bf of Theorem \ref{th:29May2018}} \rm
Recall that, for $z\in\{0,1\}^n$, we put
$
\Theta_z = \{c\in\Theta\,\vert\, Z(c)=z \}
$
and that $\iota_z:\Theta_z\rightarrow\Theta$ is the inclusion.
Applying Lemma \ref{e24May2018} in the case where $F=A\iota_z$,
we have
\begin{equation}\label{a24May2018}
\mathcal B(\Theta_z)
= 
\sigma(A\iota_z, R_\alpha\iota_z ; \alpha\in\ker A\iota_z).
\end{equation}
The equation $\ker A\iota_z=\mathbf R^{J(z)}\cap \ker A$
implies 
\begin{equation}\label{b24May2018}
\sigma\(A\iota_z,\,R_\alpha\iota_z: \alpha\in\ker A\iota_z\)
=
\sigma\(A\iota_z,\,R_\alpha\iota_z: \alpha\in\mathbf R^{J(z)}\cap \ker A\)
\end{equation}
By Equations \eqref{a24May2018} and \eqref{b24May2018}, we have
\begin{align}
\mathcal B(\Theta_z)
&=
\sigma\(A\iota_z,\,R_\alpha\iota_z: \alpha\in\mathbf R^{J(z)}\cap \ker A\)
\subset
\iota_z^{-1}\sigma\(A,\,R_\alpha: \alpha\in\mathbf R^{J(z)}\cap \ker A\)
\nonumber\\&
\subset
\iota_z^{-1}\sigma\(A,\,R_\alpha,\,Z: \alpha\in\mathbf \ker A\)
\subset 
\sigma\(A,\,R_\alpha,\,Z: \alpha\in\mathbf \ker A\).
\label{c24May2018}
\end{align}
Any $B\in\mathcal B(\Theta)$ can be decomposed as 
$
B
= \bigcup_{z\in\{0,1\}^n} \(B\cap \Theta_z\)
= \bigcup_{z\in\{0,1\}^n} \iota_z^{-1}B.
$
By \eqref{c24May2018},
$B$ is an element of
$\sigma\(A,\,R_\alpha,\,Z: \alpha\in\mathbf \ker A\)$.
Hence, we have
\begin{equation}\label{d24May2018}
\mathcal B(\Theta)
=
\sigma\(A,\,R_\alpha,\,Z:\alpha\in\ker A\)
\end{equation}
The $\sigma$-algebra generated by $\theta$ is
the pull-back of 
the left-hand side of \eqref{d24May2018}
with respect to $\theta$.
By Lemma \ref{d19Dec2017}, 
the pull-back of the right-hand side of \eqref{d24May2018} equals to
$
\sigma\(A\theta,\,R_\alpha(\theta),\,Z(\theta):\alpha\in\ker A\).
$
Hence, we have 
$$
\sigma\(\theta\)
=
\sigma\(A\theta,\,R_\alpha(\theta),\,Z(\theta):\alpha\in\ker A\).
$$
This equation implies 
$$
\sigma\(AX,\,\theta\)
=
\sigma\(AX,\,A\theta,\,R_\alpha(\theta),\,Z(\theta):\alpha\in\ker A\)
$$
By Theorem 4, we have
\begin{align*}
\sigma\(A\theta,\mathcal O\)
&
=\sigma\(A\theta,\,\sigma\(AX,\,R_\alpha(\theta),\,Z(\theta):\alpha\in\ker A\)\)
\\&
=\sigma\(A\theta,\,AX,\,R_\alpha(\theta),\,Z(\theta):\alpha\in\ker A\)
=\sigma\(AX,\,\theta\)
\end{align*}
\qed
\end{proof}

\section{Examples of CMLE problems}


\def\mytheta{p}
Theorem \ref{th:27Dec2017} and \ref{th:29May2018} claim that when $AX$ is given,
$\sigma(R_\alpha(\theta), Z(\theta))$ are of interest and 
$\sigma(A\theta)$ is a nuisance.
In the case of contingency tables, generalized odds ratios $R_\alpha(\mytheta)$ and positions
of zero cells $Z(\mytheta)$ are of interest
and row and column probabilities $A\mytheta$ are a nuisance
when the marginal sums of the table are given.
We present examples of estimating generalized odds ratios
by CMLE.

\begin{example} \rm
We generate categorical data 
concerning the number of hours slept and time of going to bed
from a student sample in the LearnBayes package
\footnote{\url{https://cran.r-project.org/web/packages/LearnBayes/index.html}}
of the system R for statistical computing.

Rows are categorized by time spent sleeping.
The categories are sleeping
less than 6 hours, 6--7 hours, and more than 7 hours.
Columns are categorized by the time of going to bed.
The categories are going to bed
before midnight, between midnight and 1am, and after 1am.
We wish to analyze these categorical data by the Poisson random model
$U_{ij} \sim {\rm Pois}(\mytheta_{ij})$. 
The independence of rows and columns is rejected by the $\chi^2$ test
with the threshold $p$-value $0.05$.
Then, we regard the column sum $\sum_i \mytheta_{ij}$
and the row sum $\sum_j \mytheta_{ij}$ as nuisance parameters.
These represent probabilities of the event standing for $j$-th row and 
one standing for $i$-th column
when the rows and the columns are independent.
We perform CMLE under the condition that column sums $\sum_i  u_{ij}$ and
row sums $\sum_j u_{ij}$ are given.
\medbreak

\noindent
Categorical data for all:
\begin{center}
\begin{tabular}{r|ccc}
Bed time $\backslash$ Hours slept&  less than 6 hour & 6--7 & more than 7 hours \\ \hline
Before 24   & 1 & 6 & 123 \\
24--25  & 3 & 22 & 145 \\
After 25 & 86 & 91 & 176 \\
\end{tabular} 
\end{center}
We omit titles and express this table as 
$
\begin{pmatrix}
 1&  6&  123 \cr
 3&  22&  145 \cr
 86&  91&  176 \cr
\end{pmatrix}
$.
Categorical data for males:
$$
\begin{pmatrix}
 1&  2&  28 \cr
0&  4&  47 \cr
 35&  32&  71 \cr
\end{pmatrix}
$$
Categorical data for females:
$$
\begin{pmatrix}
0 &  4&  95 \cr
 3&  18&  98 \cr
 51&  59&  105 \cr
\end{pmatrix}
$$

\medbreak
Because this CMLE can be solved by the $\mathcal{A}$-distribution discussed previously,
we apply our algorithm for evaluating normalizing constants and their 
derivatives to the method for estimating the conditional maximum likelihood in \cite[\S 4]{TKT}. 
We obtain the following estimates.
\noindent
CMLE $(\mytheta_{ij})$ for all:
$$
\begin{pmatrix}
 0.176556059977815&  1&  10.5634953362788 \cr
 0.144532927997885&  1&  3.39969669537228 \cr
 1&  1&  1 \cr
\end{pmatrix}
$$
CMLE for males:
$$
\begin{pmatrix}
 0.458167657900967&  1&  \underline{6.25676090279981} \cr
0&  1&  \underline{5.25200491199345} \cr
 1&  1&  1 \cr
\end{pmatrix}
$$
CMLE for females:
$$
\begin{pmatrix}
0&  1&  \underline{13.2714773737657} \cr
 0.193351042187373&  1&  \underline{3.04872586155291} \cr
 1&  1&  1 \cr
\end{pmatrix}
$$
As explained in the previous section,
the space of parameters of interest should be regarded as 
the collection of different orbits by the torus action.
When the parameter value obtained via CMLE is $(\mytheta_{ij})$, 
values on the orbit
$(\gi_i \gj_j \mytheta_{ij})$, $\gi_i, \gj_j \in \mathbb{R}_{>0}$
are equivalent parameters.
Since the normalized elements of the second column and the third row are $1$,
we have
$\gi_3 \gj_1 = \gi_3 \gj_2 = \gi_3 \gj_3 = 1$
and
$\gi_1 \gj_2 = \gi_2 \gj_2 = \gi_3 \gj_2 = 1$.
Then, we have $\gi_i \gj_j =1$ for all $i, j$. 
The condition whereby this normalization is possible 
($\mytheta_{i2} \not= 0$, $\mytheta_{3j} \not= 0$)
defines a subspace of the parameters of interest.
The subspace is isomorphic to $\mathbb{R}_{\geq 0}^4$ by the quotient topology.
The correspondence is given by 
\begin{equation} \label{eq:odds1}
(\mytheta_{ij}) \mapsto
\left(
\begin{array}{ccc}
\frac{\mytheta_{11}\mytheta_{32}}{\mytheta_{12} \mytheta_{31}} & 1 & \frac{\mytheta_{13}\mytheta_{32}}{\mytheta_{12} \mytheta_{33}} \\
\frac{\mytheta_{21}\mytheta_{32}}{\mytheta_{22} \mytheta_{31}} & 1 & \frac{\mytheta_{23}\mytheta_{32}}{\mytheta_{22} \mytheta_{33}} \\
1 & 1 & 1
\end{array}
\right)
\end{equation} 
In this chart, males and females exhibit different tendencies.
For example, the underlined values at $(1,3)$ and $(2,3)$ positions are close in the case of males but not for females.

The number obtained by replacing $\mytheta_{ij}$ by the frequency $u_{ij}$
in (\ref{eq:odds1}) is called a generalized odds ratio.
Generalized odds ratios for our data are as follows.
Odds ratios for all:
$$
\begin{pmatrix}
 0.176356589147287&  1&  10.5994318181818 \cr
 0.144291754756871&  1&  3.40779958677686 \cr
 1&  1&  1 \cr
\end{pmatrix}
$$
Odds ratios for males:
$$
\begin{pmatrix}
 0.457142857142857&  1&  6.30985915492958 \cr
0&  1&  5.29577464788732 \cr
 1&  1&  1 \cr
\end{pmatrix}
$$
Odds ratios for females:
$$
\begin{pmatrix}
0&  1&  13.3452380952381 \cr
 0.19281045751634&  1&  3.05925925925926 \cr
 1&  1&  1 \cr
\end{pmatrix}
$$
Note that,
as proved in \cite[Theorem 5]{TKT}, these generalized odds ratios
approximate CMLE because we have a sufficient sample size.
\end{example}

When the sample size is relatively small,
a generalized odds ratio may not approximate the corresponding CMLE well.
We present one example.
\begin{example} \rm
The categorical data below are taken from emergency safety information
on diclofenac sodium for influenza encephalitis 
and encephalopathy\footnote{Pharmaceuticals and Medical Devices Agency, Japan, 2000, \url{https://www.pmda.go.jp/files/000148557.pdf}}. \\
Categorical data:
\begin{center}
\begin{tabular}{c|ccc}
        & acetaminophen & diclofenac sodium & mefenamic acid \\ \hline
death & 4                         &  7  & 2 \\ 
survival & 32                       &  5   & 6 \\ \hline
\end{tabular}
\end{center}
We omit titles and express this table as 
$
\left(
\begin{array}{ccc}
 4 & 7 & 2 \\
 32& 5 & 6 \\
\end{array}
\right)
$.
By applying our algorithm and the method in \cite{TKT}, we obtain
the following CMLE.
$$
\begin{pmatrix}
 1&  \underline{10.5557279737263} &  2.62096714359908 \cr
 1&  1&  1 \cr
\end{pmatrix}
$$
Generalized odds ratios are
$$
\begin{pmatrix}
 1&  \underline{11.2} &  2.66666666666667 \cr
 1&  1&  1 \cr
\end{pmatrix}
$$
See the numbers underlined above.
We observe that the odds ratio is larger than the CMLE.
In other words, the effect of nuisance parameters increases the risk
in this case.
Finally, we briefly note how subsequent data released from the same institute in 2001 appeared to show that diclofenac sodium was in fact more associated with survival, rather than death. This reminds us of some of the difficulties inherent in statistical analyses. Here are those new data: 
\footnote{\url{http://idsc.nih.go.jp/disease/influenza/iencepha.html}}.
\begin{center}
\begin{tabular}{c|ccc}
        & acetaminophen & diclofenac sodium & mefenamic acid \\ \hline
death & 23                         &  13  & 6 \\ 
survival & 78                       &  25   & 9 \\ \hline
\end{tabular}
\end{center}
\noindent
Our algorithm outputs
CMLE
$$
\begin{pmatrix}
 1&  1.7567483756645&  2.24788463785377 \cr
 1&  1&  1 \cr
\end{pmatrix}
$$
\noindent
and odds ratios:
$$
\begin{pmatrix}
 1&  1.76347826086957&  2.26086956521739 \cr
 1&  1&  1 \cr
\end{pmatrix} .
$$
\end{example}

\section{Appendix}\label{sec:appendix}
\small
We will explain 
the derivation of the matrix $U_2$ of Example \ref{example:5}
with twisted cohomology groups
by following \cite{MG} and the program  
{\tt gtt\_ekn3/ekn\_pfaffian\_8.rr} of the package {\tt gtt\_ekn3}.

We start with the integral representation of ${}_2 F_1$:
  $$
  \frac{\Gamma (b)\Gamma (c-b)}{\Gamma (c)} \cdot 
  {}_2 F_1 (a,b,c;x)
  =\int_0^1 t^{b-1}(1-t)^{c-b-1}(1-xt)^{-a} dt
  =(-1)^b \int_0^{-1} t^{b}(1+xt)^{-a}(1+t)^{c-b-1} \frac{dt}{t}.
  $$
We rename the parameters $a,b,c$ by
  $$
  (\alpha_0 ,\alpha_1 ,\alpha_2 ,\alpha_3)=(a-c+1,b,-a,c-b-1).
  \footnote{$\alpha_0=-\alpha_1-\alpha_2-\alpha_3$ stands for the exponent
at infinity.}
  $$
The decrement of $a$ stands for an increment of $\alpha_2$ 
(and decrement of $\alpha_0$).
The identity we want to derive is $F(a)=M(a)F(a+1)$,
which is a special case of 
  $$
  \mathbf{S}(\alpha ;x) =\frac{1}{\alpha_2 }U_2(\alpha_{(2)} ;x) \mathbf{S}(\alpha_{(2)} ;x) ,\quad 
  \alpha_{(2)}:=(\alpha_0 +1,\alpha_1 ,\alpha_2 -1,\alpha_3)
  $$
in \cite[Corollary 6.3]{MG} 
($\alpha_{(2)}$ stands for $a+1$ ).   
The function {\tt upAlpha(2,1,1)} in the program derives 
$\frac{1}{\alpha_2} U_2$.
$\mathbf{S}(\alpha ;x)$ is the vector consisting of
the hypergeometric series $S(\alpha ;x)$ 
defined in  \cite[Section 6]{MG}
and its derivatives (Gauss-Manin vector).
When  $c\in \mathbb{N}_0$,
it can be expressed in terms of ${}_2F_1$ as
  $$
  \mathbf{S}(\alpha ;x)=
  \begin{pmatrix}
    S \\ \frac{1}{\alpha_2}\theta_x S
  \end{pmatrix}
  =
  \begin{pmatrix}
    1 & 0 \\ 0 & 1/\alpha_2
  \end{pmatrix}
  \begin{pmatrix}
    S \\ \theta_x S
  \end{pmatrix}
  = \frac{1}{(-a)! (-b)! (c-1)!}
  \begin{pmatrix}
    1 & 0 \\ 0 & 1/\alpha_2
  \end{pmatrix}
  \begin{pmatrix}
    _2 F_1 \\ \theta_x {}_2 F_1
  \end{pmatrix}.
  $$
Hence, the matrix $M(a)$ can be expressed as
  $$
  M(a)=-a 
  \begin{pmatrix}
    1 & 0 \\ 0 & \alpha_2
  \end{pmatrix}
  \Big( \frac{1}{\alpha_2}U_2(\alpha_{(2)}) \Big)
  \begin{pmatrix}
    1 & 0 \\ 0 & 1/(\alpha_2-1)
  \end{pmatrix}
  = 
  \begin{pmatrix}
    1 & 0 \\ 0 & \alpha_2
  \end{pmatrix}
  U_2(\alpha_{(2)}) 
  \begin{pmatrix}
    1 & 0 \\ 0 & 1/(\alpha_2-1)
  \end{pmatrix}.
  $$
It follows from 
  \cite[Theorem 5.3]{MG} that the representation matrix  $U_2$ can
be expressed as
  $$
  U_2 (\alpha_{(2)};x)
  =C(\alpha)P_2 (\alpha)^{-1} D_2(x) Q_2 (\alpha_{(2)}) C(\alpha_{(2)})^{-1}.
  $$
We use the notation
$|\tilde{x}\langle ij \rangle |$,
which is the determinant of the minor matrix consisting
of the $i$-th column and the $j$-th column of the matrix
  $\tilde{x}=
  \begin{pmatrix}
    1&0&1&1 \\ 0&1&x&1
  \end{pmatrix}
$,
where the numbering starts with $0$
(see \cite{MG} as to details).
We put $\varphi \langle ij \rangle =\frac{|\tilde{x}\langle ij \rangle | dt}{L_i L_j}$, 
where $L_0=1$, $L_1=t$, $L_2=1+xt$, and $L_3=1+t$.
We have the following expressions with these notations.
  \begin{align*}
    D_2(x)&={\rm diag} \left( 
    \frac{|\tilde{x}\langle 21 \rangle|}{|\tilde{x}\langle 01 \rangle|} ,
    \frac{|\tilde{x}\langle 23 \rangle|}{|\tilde{x}\langle 03 \rangle|} \right)
    ={\rm diag}(1,1-x) 
    =
    \begin{pmatrix}
      1 & 0 \\ 0 &1-x
    \end{pmatrix},\\
    C(\alpha)&=
    \begin{pmatrix}
      \mathcal{I}(\varphi \langle 01 \rangle,\varphi \langle 01 \rangle ) & 
      \mathcal{I}(\varphi \langle 01 \rangle,\varphi \langle 02 \rangle ) \\ 
      \mathcal{I}(\varphi \langle 02 \rangle,\varphi \langle 01 \rangle ) & 
      \mathcal{I}(\varphi \langle 02 \rangle,\varphi \langle 02 \rangle )
    \end{pmatrix}
    =2\pi\sqrt{-1}
    \begin{pmatrix}
      \frac{1}{\alpha_0}+\frac{1}{\alpha_1} & \frac{1}{\alpha_0} \\
      \frac{1}{\alpha_0} & \frac{1}{\alpha_0}+\frac{1}{\alpha_2}
    \end{pmatrix} ,\\
    Q_2(\alpha)&=
    \begin{pmatrix}
      \mathcal{I}(\varphi \langle 01 \rangle,\varphi \langle 01 \rangle ) & 
      \mathcal{I}(\varphi \langle 01 \rangle,\varphi \langle 02 \rangle ) \\ 
      \mathcal{I}(\varphi \langle 03 \rangle,\varphi \langle 01 \rangle ) & 
      \mathcal{I}(\varphi \langle 03 \rangle,\varphi \langle 02 \rangle )
    \end{pmatrix}
    =2\pi\sqrt{-1}
    \begin{pmatrix}
      \frac{1}{\alpha_0}+\frac{1}{\alpha_1} & \frac{1}{\alpha_0} \\
      \frac{1}{\alpha_0} & \frac{1}{\alpha_0}
    \end{pmatrix} ,\\
    P_2(\alpha)&=
    \begin{pmatrix}
      \mathcal{I}(\varphi \langle 21 \rangle,\varphi \langle 01 \rangle ) & 
      \mathcal{I}(\varphi \langle 21 \rangle,\varphi \langle 02 \rangle ) \\ 
      \mathcal{I}(\varphi \langle 23 \rangle,\varphi \langle 01 \rangle ) & 
      \mathcal{I}(\varphi \langle 23 \rangle,\varphi \langle 02 \rangle )
    \end{pmatrix}
    =2\pi\sqrt{-1}
    \begin{pmatrix}
      \frac{1}{\alpha_1} & -\frac{1}{\alpha_2} \\
      0 & -\frac{1}{\alpha_2}
    \end{pmatrix},
  \end{align*}
where $\mathcal{I}$ is the intersection form on the twisted cohomology group. 
The inverse matrices of them can also be expressed in terms of
intersection numbers as in \cite[Appendix]{MG}.
This method is implemented as the function 
{\tt invintMatrix\_k}  in our package
and it outputs
  \begin{align*}
    P_2(\alpha)^{-1}
    &=\frac{1}{(2\pi\sqrt{-1})^2}
    \begin{pmatrix}
      \alpha_1 & 0 \\ 0 & \alpha_2
    \end{pmatrix} 
    \begin{pmatrix}
      \mathcal{I}(\varphi \langle 31 \rangle,\varphi \langle 01 \rangle ) & 
      \mathcal{I}(\varphi \langle 31 \rangle,\varphi \langle 03 \rangle ) \\ 
      \mathcal{I}(\varphi \langle 32 \rangle,\varphi \langle 01 \rangle ) & 
      \mathcal{I}(\varphi \langle 32 \rangle,\varphi \langle 03 \rangle )
    \end{pmatrix} 
    \begin{pmatrix}
      \alpha_1 & 0 \\ 0 & \alpha_3
    \end{pmatrix}\\
    &=\frac{1}{2\pi\sqrt{-1}}
    \begin{pmatrix}
      \alpha_1 & 0 \\ 0 & \alpha_2
    \end{pmatrix} 
    \begin{pmatrix}
      \frac{1}{\alpha_1} & -\frac{1}{\alpha_3} \\ 0 & -\frac{1}{\alpha_3}
    \end{pmatrix} 
    \begin{pmatrix}
      \alpha_1 & 0 \\ 0 & \alpha_3
    \end{pmatrix}
    =\frac{1}{2\pi\sqrt{-1}}
    \begin{pmatrix}
      \alpha_1 & -\alpha_1 \\ 0 & -\alpha_2
    \end{pmatrix}  ,\\ 
    C(\alpha)^{-1}
    &=\frac{1}{(2\pi\sqrt{-1})^2}
    \begin{pmatrix}
      \alpha_1 & 0 \\ 0 & \alpha_2
    \end{pmatrix} 
    \begin{pmatrix}
      \mathcal{I}(\varphi \langle 31 \rangle,\varphi \langle 31 \rangle ) & 
      \mathcal{I}(\varphi \langle 31 \rangle,\varphi \langle 32 \rangle ) \\ 
      \mathcal{I}(\varphi \langle 32 \rangle,\varphi \langle 31 \rangle ) & 
      \mathcal{I}(\varphi \langle 32 \rangle,\varphi \langle 32 \rangle )
    \end{pmatrix}     
    \begin{pmatrix}
      \alpha_1 & 0 \\ 0 & \alpha_2
    \end{pmatrix}\\
    &=\frac{1}{2\pi\sqrt{-1}}
    \begin{pmatrix}
      \alpha_1 & 0 \\ 0 & \alpha_2
    \end{pmatrix} 
    \begin{pmatrix}
      \frac{1}{\alpha_3}+\frac{1}{\alpha_1} & \frac{1}{\alpha_3} \\ 
      \frac{1}{\alpha_3} & \frac{1}{\alpha_3}+\frac{1}{\alpha_2}
    \end{pmatrix}
    \begin{pmatrix}
      \alpha_1 & 0 \\ 0 & \alpha_2
    \end{pmatrix}
    =\frac{\alpha_1 \alpha_2}{2\pi\sqrt{-1}\cdot \alpha_3}
    \begin{pmatrix}
      \frac{\alpha_1 +\alpha_3}{\alpha_2} & 1 \\ 
      1 & \frac{\alpha_2 +\alpha_3}{\alpha_1}
    \end{pmatrix}.
  \end{align*}
These matrices can be obtained in our program as
  \begin{align*}
    &D_2(x)={\tt repMatrix(2,1,1)},& 
    &C(\alpha)/(2\pi\sqrt{-1})={\tt intMatrix([0,3],[0,3],1,1)},& \\
    &P_2(\alpha)/(2\pi\sqrt{-1})={\tt intMatrix([2,0],[0,3],1,1)},&  
    &Q_2(\alpha)/(2\pi\sqrt{-1})={\tt intMatrix([0,2],[0,3],1,1)},& \\
    &(2\pi\sqrt{-1})P_2(\alpha)^{-1}={\tt invintMatrix\_k([2,0],[0,3],1,1)},& 
    &(2\pi\sqrt{-1})C(\alpha)^{-1}={\tt invintMatrix\_k([0,3],[0,3],1,1)}
  \end{align*}
  (the argument $(1,1)$ stands for $(r_1-1, r_2-1)$). 

\normalsize

\medbreak
{\it Acknowledgement}:
This work was supported by MEXT/JSPS KAKENHI Grant Numbers JP
25220001, 
17K05279, 
18J01507 
and by Research Institute for Mathematical Sciences,
a Joint Usage/Research Center located in Kyoto University.
We deeply appreciate several constructive criticisms by the reviewers,
which made big improvements of our algorithms and implementation.


\begin{thebibliography}{99}
\bibitem{agresti}
A.~Agresti, 
Categorical Data Analysis,
3rd edition, 
Wiley Series in Probability and Statistics, Wiley,
2013.
\bibitem{amari2016information}
  S.~Amari, 
  Information Geometry and Its Application, 
  Applied Mathematical Sciences, Springer Japan, 2016.
\bibitem{billingsley1995probability}
  P.~Billingsley,  Probability and Measure,
  Wiley Series in Probability and Statistics, Wiley, 1995.
\bibitem{brent-zimmermann-2010}
R.~P.~Brent, P.~Zimmermann,
Modern Computer Arithmetic,
Cambridge University Press, 2012.
\bibitem{LawrenceDBrown1986}
  L.~D.~Brown, 
  \newblock Fundamentals of statistical exponential families with applications in
  statistical decision theory.
  \newblock {\em Lecture Notes-Monograph Series}, 9:i--279, 1986.  
\bibitem{gathen-gerhard}
J.~von zur Gathen, J.~Gerhard,
Modern Computer Algebra,
Cambridge University Press, 2003.
\bibitem{MG} Y.~Goto, K.~Matsumoto, 
         Pfaffian equations and
	 contiguity relations of the hypergeometric function of
	 type($k+1,k+n+2$) and their applications, 
         Funkcialaj Ekvacioj 61 (2018), 315--347.
\bibitem{goto-2017}
Y.~Goto, 
Contiguity relations of Lauricella's $F_D$ revisited,
Tohoku Mathematical Journal 69 (2017), 287--304.
\bibitem{goto-0} 
Y.~Goto,
Intersection numbers of twisted cycles and cocycles for degenerate arrangements, 
arXiv:1805.01714.
\bibitem{GMP}
T.~Granlund, and the GMP development team,
GNU Multiple Precision Arithmetic Library,
1991--2019,
\url{http://gmplib.org}
\bibitem{dojo} T.~Hibi et al, Gr\"obner Bases : Statistics and Software Systems, Springer, 2013.
\bibitem{joris2016}
J.~van der Hoeven,
Faster chinese remaindering,
2016, hal-01403801
\bibitem{karatzas1988brownian}
  I.~Karatzas and S.~Shreve, 
  Brownian Motion and Stochastic Calculus,
  Graduate texts in mathematics, Springer-Verlag, 1988.
\bibitem{koyama-sufficient}
  T.~Koyama,
  A new formulation of sufficient $\sigma$-algebra,
  in preparation.
\bibitem{Landers-Rogge1972}
  D.~Landers and L.~Rogge,
  Minimal sufficient $\sigma$-fields and minimal sufficient statistics
  two counterexamples,
  The Annals of Mathematical Statistics 43, 6 (1972), 2045--2049.
       \bibitem{linbox} LINBOX: exact computational linear algebra.
         \url{http://www.linalg.org}
\bibitem{muller-2008} 
N.~ M\"oller, 
On Sch\"onhage's algorithm and subquadratic integer gcd computation,
Mathematics of Computation 77 (2008), 589--607.
 \bibitem{HGM} H.~Nakayama, K.~Nishiyama, M.~Noro, K.~Ohara, T.~Sei,
	 N.~Takayama, A.~Takemura, Holonomic gradient descent and
	 its application to Fisher-Bingham integral, Advances in Applied
	 Mathematics 47 (2011), 639-658.
 \bibitem{asir} M.~Noro, N.~Takayama, Risa/Asir Drill 2012, 
	 \url{http://www.math.kobe-u.ac.jp/Asir}, 2012 (in Japanese).
 \bibitem{MM1} M.~Noro, K.~Yokoyama, A modular method to
	 compute the rational univariate representation of
	 zero-dimensional ideals.
         Journal of Symbolic Computation 28/1(1999), 243-263.
 \bibitem{itor} M.~Noro, K.~Yokoyama, Computation of Gr\"obner Basis ---
 introduction to computational algebra, University of Tokyo press, 2003
(in Japanese).
 \bibitem{ogawa} M.~Ogawa,
  Algebraic Statistical Methods for Conditional Inference of Discrete 
Statistical Models, PhD thesis, University of Tokyo, 2015.
 \bibitem{OT} K.~Ohara, N.~Takayama,  Pfaffian systems of
	 $A$-hypergeometric systems II ---  holonomic gradient method,
	 arXiv:1505.02947.
 \bibitem{Oshima} T.~Oshima, Fractional Calculus of Weyl Algebra and
	 Fuchsian Differential Equations, MSJ Memoirs 28, 2013.
\bibitem{recipes} W.~H.~Press, S.~A.~Teukolsky, W.~T.~Vetterling,
B.~P.~Flannery,
Numerical Recipes 3rd Edition: The Art of Scientific Computing,
Cambridge University Press, 2007.
 \bibitem{RHGM} References for
   HGM\\\url{http://www.math.kobe-u.ac.jp/OpenXM/Math/hgm/ref-hgm.html}
 \bibitem{risa-asir} Risa/Asir, a computer algebra system.
   \url{http://www.math.kobe-u.ac.jp/Asir}
 \bibitem{MM2} T.~Sasaki, T.~Takeshima,  A modular method for
	 Gr\"{o}bner-basis construction over $\mathbb{Q}$ and solving
	 system of algebraic equations.
         Journal of Information Processing, 
	 12(1989), no.4, 371-379.
\bibitem{Schervish} M.~Schervish,  Theory of Statistics,
Springer, 1995.
\bibitem{SB} J.~Stoer, R.~Bulirsch, Introduction to Numerical Analysis, 
Springer-Verlag, New York-Heidelberg, 1980.
\bibitem{rims-tgt} Y.~Tachibana, Y.~Goto, N.~Takayama,
An implementation of the difference holonomic gradient method
to two way contingency tables,
RIMS Kokyuroku, 2054(2017), 105--117 
(in Japanese).
       \bibitem{T1989} N.~Takayama, Gr\"obner basis and the problem of contiguous relations,  Japan Journal of  Applied Mathematics, 6 (1989), 147-160. 
       \bibitem{TKT}
         N.~Takayama, S.~Kuriki, A.~Takemura,
         $A$-hypergeometric distributions and Newton polytopes,
         Advances in Applied Mathematics 99 (2018), 109 -- 133.
\bibitem{williams1991probability}
  D.~Williams,
  Probability with Martingales,
  Cambridge mathematical textbooks, Cambridge University Press, 1991.
\end{thebibliography}
\end{document}